\documentclass[Threeside,11pt]{article} 
		\usepackage{etoolbox}
		\usepackage{setspace}
		\usepackage{amsmath,amsthm,amsfonts,amssymb,amscd,mathrsfs}
		\usepackage{tikz}
		\usepackage{tikz-cd}
		\usepackage{float}
		\usepackage{subcaption}
		\usepackage[latin1]{inputenc}
		\usepackage[english]{babel}
		\usepackage{algorithm}
		\usepackage{adjustbox}
		\usepackage{cite}
		\usepackage{graphicx}
		\usepackage{colortbl,dcolumn}
		\usepackage{marvosym}
		\usepackage{ifsym}
		\usepackage{paralist}
		\usepackage{xcolor}
		\usepackage{array}
		\usepackage{geometry}
		\usepackage{enumitem}
	\DeclareFontFamily{U}{mathx}{\hyphenchar\font45}
	\DeclareFontShape{U}{mathx}{m}{n}{
		<5> <6> <7> <8> <9> <10>
		<10.95> <12> <14.4> <17.28> <20.74> <24.88>
		mathx10
	}{}
	\DeclareSymbolFont{mathx}{U}{mathx}{m}{n}
	\DeclareMathAccent{\widecheck}{0}{mathx}{"71}
		
		\definecolor{myPurple}{RGB}{109,90,207}
		\definecolor{myGreen}{RGB}{4,130,67}
		\definecolor{myGold}{RGB}{253,177,71}
		\definecolor{myBurgundy}{RGB}{63,1,44}
		\definecolor{myTeal}{RGB}{8,119,127}
		\definecolor{myCream}{RGB}{255,216,177}
		\definecolor{myOrange}{RGB}{225,119,1}
		\definecolor{mydeepgreen}{RGB}{3,100,50}

		\usetikzlibrary{shapes.geometric,arrows.meta,calc,decorations.markings,bending,arrows}
		\tikzset{
			ball3Donly/.style={
				circle, minimum size=1.4cm,
				shading=ball,
				ball color=myPurple,
				draw=myPurple
			}
		}
		\pgfmathsetmacro{\wNear}{2.0pt}
		\pgfmathsetmacro{\wMid}{1.3pt}
		\pgfmathsetmacro{\wFar}{0.7pt}
		\pgfmathsetmacro{\bend}{6mm}

		\makeatletter
		\patchcmd{\thebibliography}
		{\list}
		{\small\setstretch{1.2}\list}
		{}{}
		\makeatother

		\allowdisplaybreaks
		\newtheorem{lemma}{\bf Lemma}[section]
		\newtheorem{theorem}{\bf Theorem}[section]

		\newtheorem{remark}{\bf Remark}[section]
		
		\numberwithin{equation}{section}

		\usepackage[
		colorlinks,
		citecolor=blue,
		linkcolor=blue]{hyperref}

\begin{document}
		\title{{\sl Laskar's frequency map analysis revisited
		}}
		\author{Zhicheng Tong\thanks{School of Mathematics, Jilin University, Changchun 130012, P. R.  China. Email: \url{tongzc25@jlu.edu.cn}} 
			\and 
			Yong Li\thanks{Corresponding author. School of Mathematics, Jilin University, Changchun 130012, P. R.  China;   Center for Mathematics and Interdisciplinary Sciences,  Northeast  Normal  University, Changchun 130024, P. R. China. Email: \url{liyong@jlu.edu.cn}}
		}
		
		\date{}
		\maketitle
		
		\begin{abstract}
In this paper, we establish the exponential convergence of Laskar's pioneering frequency map analysis, strictly improving upon classical polynomial bounds. By utilizing appropriately chosen weighting functions, we achieve such exponential rates in the analytic quasi-periodic regime for frequency vectors satisfying Diophantine or Brjuno nonresonance conditions. Furthermore, we extend the theoretical framework beyond the analytic quasi-periodic regime into the analytic almost periodic setting. By employing and generalizing Bourgain's framework, we accommodate a much broader class of anisotropic spatial structures. These results yield the first unified theory of exponential convergence for frequency map analysis, revealing the interplay among analyticity, spatial structures, nonresonance conditions, the choice of weighting functions, and convergence rates. This framework has important implications for applications in fields including celestial mechanics. Moreover, the novel techniques developed herein yield new insights into the study of weighted Birkhoff averages.\\
		\\
		{\bf Keywords:} {Frequency map analysis, exponential convergence, quasi-periodicity, almost periodicity, general spatial structures}
		\vspace{2mm}
		\\
		{\bf2020 MSC codes:} {37J40,  
		70H12, 
		70H08, 
		37K55, 
		70-08, 
     	37M99 
}
		\end{abstract}
		
		\tableofcontents
		
		\section{Introduction and main results}
		\setcounter{footnote}{0}
		\renewcommand{\thefootnote}{{\arabic{footnote}}}
The celebrated frequency map analysis, initiated by Laskar around 1990, was originally introduced to demonstrate and understand the chaotic behavior of the Solar System \cite{Las88,Las90}; we refer to \cite{Las99} for detailed theoretical explanations. This methodology subsequently expanded rapidly beyond its initial scope, finding extensive applications in celestial mechanics, galactic dynamics, atomic physics, and Hamiltonian toy models, among other fields. It has also become a standard diagnostic tool for beam dynamics in various particle accelerators, including, notably, hadron colliders, synchrotron light sources, high-intensity rings, and linear collider damping rings. For a comprehensive overview of these developments, we refer to \cite{Las88,Las90,LFC92,DL93,Las93a,Las93b,LR93,LF96,PL96,PL98,RSLN00,RL01,VW01,CWU03,Las03a,Las03b,CFL04,LCG+04,LRJ+04,CFL06,GLS09,LDV09,VDLC09,RRE12,Pap14,PJV+16,FL19,HMR20,LHH+20,BCG22,BGMT23,ML23,MAB+25,PR26} and the references therein. More recently, the fundamental ideas underlying Laskar's frequency map analysis have inspired the widespread application and theoretical development of weighted Birkhoff averages. For these recent advancements, we direct the reader to \cite{DSSY17,DY18,SM20,MS21,DM23,CCGd24,RB24,TL24a,TL24b,BdJC25,BHS25,MS25,SM25a,SM25b,TL25a,TL25b,BHS26,BBC26,GZE26,HXZ26,RKB26,Ton26,TL26a,TL26b,ZZL+26} and the references therein.

The primary motivation of this paper is to revisit Laskar's frequency map analysis. We demonstrate that under analytic assumptions, by choosing suitable weighting functions, the original polynomial rate of convergence can be improved to an exponential rate, provided the frequency vector satisfies either a Diophantine or a Brjuno nonresonance condition (Theorems \ref{TH2} and \ref{THBRUNO}). Furthermore, we extend the established theoretical framework from the quasi-periodic case to the almost periodic regime by employing Bourgain's framework and generalizing it to a broader lattice setting (Theorems \ref{TH3} and \ref{TH4}). These exponential convergence results are of considerable relevance to the applications of frequency map analysis, allowing for enhanced computational precision in such contexts. The present paper is devoted primarily to theoretical aspects, without delving into numerical simulations or practical applications; the interested reader is referred to the aforementioned works. The main contributions of this paper are detailed in Section \ref{SEC14}, and the proofs of our main results are presented in Section \ref{SEC2}.

\subsection{Frequency map analysis: Polynomial convergence}

To proceed directly to the main subject of Laskar's frequency map analysis, we bypass the introduction and description of Hamiltonian systems, non-degeneracy,  frequency maps, and KAM (Kolmogorov--Arnold--Moser) theory provided in \cite[Section 2]{Las99}. Instead, we directly adopt the setting introduced in \cite[Section 3]{Las99}, with minor notational adjustments. 

Consider a KAM quasi-periodic solution of a Hamiltonian system in complex form, given by
\begin{equation}\label{QP}
	f(t)=e^{i\nu_{1}t}+\sum_{k\in\mathbb{Z}^{d}\setminus \{(1,0,\dots,0)\} } a_{k}e^{i\langle k,\nu \rangle t}, \quad a_{k}\in\mathbb{C}, \quad t \in \mathbb{R}.
\end{equation}
Here, the frequency vector $\nu\in\mathbb{R}^d$ is assumed to be nonresonant, satisfying the Diophantine condition for some constants $\kappa_{\epsilon}>0$ and $\tau\geqslant d-1$:
\begin{equation}\label{DIO}
	|\langle k,\nu \rangle|>\frac{\kappa_{\epsilon}}{\|k\|_{\ell^1}^{\tau}},\quad \forall k \in \mathbb{Z}^d \setminus \{{\bf{0}}\},\quad \|k\|_{\ell^1}:=\sum_{j=1}^{d} |k_j|.
\end{equation}
The set of frequency vectors satisfying this condition for some $\kappa_{\epsilon}>0$ and fixed $\tau>d-1$ has full Lebesgue measure in $\mathbb{R}^d$; for detailed measure estimates, we refer the reader to \cite{Pos01}. The frequency analysis algorithm\footnote{Also known as the numerical analysis of the fundamental frequency (NAFF); see \cite[Section 3]{Las99} for details.} provides an approximation $f'(t)=\sum_{k=1}^{N} a_{k}' e^{i \omega_k' t}$ of $f$ from its numerical knowledge over a finite time span $[-T,T]$. The frequencies $\omega_k'$ and complex amplitudes $a_{k}'$ can be determined via an iterative scheme.

According to Laskar's frequency map analysis \cite{Las99}, the first frequency $\omega_1'$ is determined as the value of $\sigma$ that maximizes
\[\phi(\sigma):=\left|\langle f(t), e^{i\sigma t} \rangle _T ^{\chi} \right|\]
in a neighborhood of $\nu_1$; in other words, $\omega_1':= \arg\max_{\sigma \approx \nu_1} \phi (\sigma)$, which is also denoted by $\nu_1^T$ throughout this paper. Here, the weighted scalar product $\langle f(t),g(t) \rangle_T ^{\chi}$ is defined by
\[\langle f(t),g(t) \rangle _T ^{\chi}:=\frac{1}{2T}\int_{-T}^{T} \chi(t/T)f(t)\overline{g}(t)dt,\]
where $\chi$ is a weighting function---that is, a non-negative, even, and $C^\infty$ function on $[-1,1]$ such that\footnote{We note that the normalization employed here is distinct from that of weighted Birkhoff averages, albeit fundamental in the latter case.}
\[\frac{1}{2}\int_{-1}^{1}\chi(t)dt=1.\]
Given a weighting function $\chi$, we define the transform $\varphi_{\chi}$ of $\chi$ as
\[\varphi_{\chi}(x):=\langle e^{ixt},1 \rangle_{1}^{\chi},\]
which is clearly $C^\infty$ on $\mathbb{R}$. After identifying the initial periodic component $e^{i\omega'_1 t}$, we determine its corresponding complex amplitude $a'_1$ via orthogonal projection. Subsequently, the iterative procedure is applied to the residual function, which is given by $f_1(t)=f(t)-a'_1 e^{i\omega'_1 t}$. In the course of successive projections of $f$ onto the basis elements $e^{i\omega_k' t}$, it is necessary to orthogonalize the family of functions $\{e^{i\omega'_k t}\}_{k\in\mathbb{N}^+}$. Note that the quantity denoted by $\omega_k'$ also corresponds to $\nu_k^T$ for $k\in\mathbb N^+$, in accordance with \cite{Las99} and for the reader's convenience. Since frequency map analysis is a widely used and powerful numerical tool, establishing its convergence is of paramount importance. Specifically, for a sufficiently large integration time $T$, it is essential to estimate the error between the approximate frequency $\nu_1^T$ and the exact frequency $\nu_1$. Recall that the computation of $\nu_1^T$ depends on the weighting function $\chi$; hence the error depends intrinsically on the choice of $\chi$.

Theorem \ref{TH1} below, which combines the results of \cite[Theorem 1 \& Proposition 1]{Las99}, theoretically provides a polynomial-type error estimate for Laskar's frequency map analysis.
  
\begin{theorem}[Laskar 1999]
	\label{TH1}
 Let $\chi$ be a weighting function, and $\varphi = \varphi_{\chi}$ its transform with the asymptotic expressions as $x \to + \infty$
\[
\varphi(x) = \frac{g_0(x)}{x^n} + o\left(\frac{1}{x^n}\right), \quad D\varphi(x) = \frac{g_1(x)}{x^n} + o\left(\frac{1}{x^n}\right), \quad D^2\varphi(x) = \frac{g_2(x)}{x^n} + o\left(\frac{1}{x^n}\right),  
\]
where $n \geqslant 1$, and $g_0 , g_1 $ and $g_2 $ are bounded on $\mathbb{R}$. Let $f $ be a quasi-periodic function of the form \eqref{QP}, and for all $k$, $\Omega_k := \langle k, \nu \rangle - \nu_1$; assume that $\sum\nolimits_{k \in {\mathbb{Z}^d}} {| {{a_k}\Omega _k^{ - m}} |} $ is convergent for $m = 0, 1$, and $n$. Then in frequency map analysis, as $T \to +\infty$, the error between the approximate frequency $\nu_1^{T}$ and the exact frequency $\nu_1$ satisfies
\[\nu_1^T-\nu_1    = \frac{1}{\varphi''(0)T^{n+1}} \sum_{k\in\mathbb{Z}^{d}\setminus \{(1,0,\dots,0)\} } \frac{\Re(a_k)}{\Omega_k^n} g_1(\Omega_k T) + o\left(\frac{1}{T^{n+1}}\right) 
=\mathcal{O}\left(\frac{1}{T^{n+1}}\right). \]
Consequently, for the cosine-like weighting function 
\[\chi_{m}(t)=\frac{2^{m}(m!)^{2}}{(2m)!}(1+\cos(\pi t))^{m},\quad m \in \mathbb{N}^+,\]
 we have
\[\nu_{1}^{T}-\nu_{1}=\frac{(-1)^{m}\pi^{2m}(m!)^{2}}{A_{m}T^{2m+2}}\sum_{k\in\mathbb{Z}^{d}\setminus \{(1,0,\dots,0)\} }\frac{\Re(a_{k})}{\Omega_{k}^{2m+1}}\cos(\Omega_{k}T)+o\left(\frac{1}{T^{2m+2}}\right)=\mathcal{O}\left(\frac{1}{T^{2m+2}}\right),\]
with
\[A_{m}=-\frac{2}{\pi^{2}}\left(\frac{\pi^{2}}{6}-\sum_{k=1}^{m}\frac{1}{k^{2}}\right)<0.\]
In particular, for the Hanning window filter $ \chi_{1}(t)=1+\cos(\pi t) $, the error becomes $ \mathcal{O}\left(T^{-4}\right) $.
\end{theorem}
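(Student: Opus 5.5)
We follow Laskar's stationary-point perturbation argument. Writing $\sigma=\nu_1+\varepsilon$, linearity and the definition of $\varphi$ give
\[
\langle f(t),e^{i\sigma t}\rangle_T^{\chi}=\varphi(-\varepsilon T)+\sum_{k\neq(1,0,\dots,0)} a_k\,\varphi\big((\Omega_k-\varepsilon)T\big),
\]
where the interchange of sum and integral is justified because $\sum|a_k|<\infty$ (the case $m=0$). Since $\chi$ is even and real, $\varphi$ is real and even, with $\varphi(0)=1$, $\varphi'(0)=0$, and $\varphi''(0)=-\frac12\int_{-1}^1 t^2\chi(t)\,dt<0$. Hence $\phi(\sigma)^2=|\varphi(-\varepsilon T)+R(\varepsilon)|^2$ has, for $R\equiv0$, a strict nondegenerate maximum at $\varepsilon=0$. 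The first step is to localize: for small $\varepsilon T$ the main term dominates, and we show the maximizer satisfies $\varepsilon T\to0$ as $T\to\infty$. This follows because each $\varphi(\Omega_k T)=O(|\Omega_k T|^{-n})$ (the asymptotic assumption together with boundedness of $\varphi$), so $\sum|a_k||\varphi(\Omega_kT)|\le T^{-n}\sum|a_k\Omega_k^{-n}|\to0$ when $\varepsilon$ is small relative to $\min|\Omega_k|$ on the relevant terms. For the finitely many, or small-divisor, $k$ with $|\Omega_k|$ small, we use a dominated-convergence split with the summability of $|a_k\Omega_k^{-m}|$.

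Next we differentiate $\phi^2$ in $\varepsilon$ and set it to zero. Writing $F(\varepsilon)=\varphi(\varepsilon T)+\sum a_k\varphi((\Omega_k-\varepsilon)T)$, the stationarity condition is $\Re\big(F'\overline F\big)=0$. Expanding $F$ to first order, we have $F=1+O(T^{-n})$ and $F'=T\varphi''(0)\varepsilon T-T\sum a_k\varphi'((\Omega_k-\varepsilon)T)+\dots$. Keeping leading terms gives
\[
\varphi''(0)\,\varepsilon T=\sum_k \Re(a_k)\,\varphi'(\Omega_kT)+\text{(higher order)}.
\]
Inserting $\varphi'(x)=g_1(x)x^{-n}+o(x^{-n})$ yields
\[
\varepsilon=\frac{1}{\varphi''(0)T^{n+1}}\sum_k\frac{\Re(a_k)}{\Omega_k^n}\,g_1(\Omega_kT)+o(T^{-n-1}).
\]
The error terms are controlled by an implicit-function or Taylor remainder bound, using $D^2\varphi=O(x^{-n})$ for the $\varepsilon$-shift inside the sum and the smallness of the cross term $\Re(F'\overline{(F-1)})$, which is $O(T^{1-2n})\cdot$ small. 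The bound $O(T^{-n-1})$ then follows from boundedness of $g_1$ and the convergence of $\sum|a_k\Omega_k^{-n}|$.

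The main obstacle is making the $o(\cdot)$ uniform in $k$. The asymptotics hold only as $x\to\infty$, while $\Omega_kT$ may stay bounded for $k$ with tiny $|\Omega_k|$. We would handle this by splitting the sum at $|\Omega_k|\ge T^{-1/2}$. On the large-$|\Omega_k|$ part we use the asymptotics uniformly, since $|\Omega_kT|\ge T^{1/2}\to\infty$. On the complementary part we use the global bound $|\varphi^{(j)}(x)|\le C(1+|x|)^{-n}$ and the tail $\sum_{|\Omega_k|<T^{-1/2}}|a_k\Omega_k^{-n}|\to0$, which gives $o(T^{-n-1})$ relative to the weights.

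For the cosine windows $\chi_m$, we compute $\varphi$ explicitly. Expanding $(1+\cos\pi t)^m$ into exponentials, $\varphi(x)$ becomes a finite combination of $\frac{\sin(x\pm j\pi)}{x\pm j\pi}$, which collapses to
\[
\varphi(x)=\frac{(-1)^m\pi^{2m}(m!)^2\sin x}{x\prod_{j=1}^m(j^2\pi^2-x^2)}.
\]
Thus $\varphi(x)=O(x^{-2m-1})$, so $n=2m+1$, and $D\varphi$ has leading term $(-1)^m\pi^{2m}(m!)^2\cos x/x^{2m+1}$, which gives $g_1$. Finally, $\varphi''(0)=-\frac12\int t^2\chi_m(t)\,dt$, and evaluating this through the product formula gives $A_m$ as stated. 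This produces the $T^{-2m-2}$ rate, and the case $m=1$ gives $T^{-4}$.
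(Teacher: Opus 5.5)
Your proposal is correct and is essentially Laskar's argument: stationarity of $|\psi|^2$, the implicit function theorem at $x=0$, $T=+\infty$ using $D^2\varphi(0)<0$, then linearization. The paper does not reprove Theorem~\ref{TH1}, but it adapts exactly this skeleton in Section~\ref{SEC21} (Lemma~\ref{lemma:2}), and your split at $|\Omega_k|=T^{-1/2}$ does the job of its dominated-convergence step. No positive $\min_k|\Omega_k|$ exists or is needed, since $|D^j\varphi((\Omega_k-\varepsilon)T)|\leqslant C_A|\Omega_k T|^{-n}$ for all $k$ and $j\leqslant 2$ once $|\varepsilon T|\leqslant A$.

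There is one sign slip in the cosine-window part: the closed form should be $\varphi(x)=(-1)^m\pi^{2m}(m!)^2\sin x\big/\bigl(x\prod_{j=1}^m(x^2-j^2\pi^2)\bigr)$, because your version gives $\varphi(0)=(-1)^m$. With this correction, $g_1(x)=(-1)^m\pi^{2m}(m!)^2\cos x$, $n=2m+1$ and $\varphi''(0)=A_m$ follow exactly as you describe.
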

\begin{remark}
Given the assumption that the frequency vector $\nu$ satisfies the Diophantine condition \eqref{DIO}, it is evident that if $f$ in \eqref{QP} is analytic, $\sum\nolimits_{k \in {\mathbb{Z}^d}} {| {{a_k}\Omega _k^{ - m}} |} $ converges for $m = 0, 1$, and $ n $, thereby establishing Theorem \ref{TH1}.
\end{remark}
\begin{remark}
Without a non-trivial weighting function (i.e., using the trivial weighting function $\chi_0(t)$), the ordinary fast Fourier transform (FFT) method yields an error of $\mathcal{O}\left(T^{-1}\right)$, which is significantly slower than the rates established in Theorem \ref{TH1}.
\end{remark}

\subsection{From polynomial to exponential convergence}\label{SUBSEC12}

In numerical simulations, Laskar frequently employed the Hanning window filter $ \chi_1 $, which yields a convergence rate of $\mathcal{O}(T^{-4})$ for his frequency map analysis (see Theorem \ref{TH1}). Furthermore, in \cite[Remark 2]{Las99}, he introduced a specific exponential weighting function, henceforth referred to as Laskar's weighting function, defined by 
	\begin{equation}\notag
	w_{\rm Las}(x) := 
	\begin{cases}
	c_1\exp \left( { - {{\left( {1 - x^{2}} \right)^{-1}}}} \right), & x \in (-1,1), \\
		0, & x \in \{-1, 1\},
	\end{cases}\quad  
	\end{equation}
  where $c_1 > 0$ is a normalization constant chosen such that $\frac{1}{2} \int_{-1}^1 w_{\mathrm{Las}}(t) dt = 1$. Remarkably, this function guarantees\footnote{While Laskar did not provide a rigorous convergence analysis of this weighting function for weighted Birkhoff averages---a result later established in \cite{DSSY17,DY18}---its rigorous justification for frequency map analysis readily follows from Theorem \ref{TH1}. Moreover, we note that the analytical frameworks of both approaches share certain similarities, particularly in the context of continuous-time flows independently studied in \cite{DM23,TL24a}.} a convergence rate of $\mathcal{O}(T^{-\upsilon})$ for frequency map analysis, where $\upsilon $ is an arbitrary positive integer. It was noted in \cite{DSSY17} that ``There seems to be no advantage to using his lower order methods\footnote{This refers to the application of the Hanning window filter $ \chi_1 $.} than $C^\infty$ filter.'' In practice, employing Laskar's weighting function $w_{\rm Las}$ yields an exponential rate of convergence that is substantially faster than that of the Hanning window filter $\chi_1$; consequently, it has been widely adopted in recent studies on weighted Birkhoff averages. Subsequently, Laskar's weighting function $w_{\rm Las}$ was generalized to a parameterized variant $w_{\rm Las}^{p,q}$ with $ p,q>0 $, defined as
 	\begin{equation}\notag 
 	w_{\rm Las}^{p,q}(x) := 
 	\begin{cases}
 		c_2\exp \left( { - {{\left( {1 + x} \right)}^{ - p}}{{\left( {1 - x} \right)}^{ - q}}} \right), & x \in (-1,1), \\
 		0, & x \in \{-1, 1\},
 	\end{cases}\quad
 \end{equation}
 where $c_2 > 0$ is chosen such that $\frac{1}{2} \int_{-1}^1 w_{\mathrm{Las}}^{p,q}(t) dt = 1$. This variant exhibits exceptional acceleration properties for weighted Birkhoff averages (see, e.g., \cite{DSSY17,CCGd24,TL25b,Ton26,TL26a}).
 
 To date, however, the convergence of frequency map analysis based on Laskar's weighting function $w_{\mathrm{Las}}$ has remained restricted to the polynomial rate originally established by him. As a principal contribution of the present paper, we adapt certain ideas from \cite{TL24a,TL25b,TL26a} to establish, in Theorem \ref{TH2}, the first exponential convergence result for frequency map analysis utilizing the generalized weighting function $w_{\mathrm{Las}}^{p,q}$ under the assumptions of analyticity and the Diophantine condition. Here, the function $f$ in \eqref{QP} is said to be analytic if there exist constants $C_f, r > 0$ such that $|a_k| \leqslant C_f \exp(-r\|k\|_{\ell^1})$ for all $k \in \mathbb{Z}^d$; see \cite[Lemma 1]{Sal04} for details.

 
\begin{theorem}[\textbf{Main Theorem I}]
\label{TH2}
Let $w_{\rm Las}^{p,q}$, with $p,q>0$, be the parameterized variant of Laskar's weighting function, and let $f$ be an analytic quasi-periodic function of the form \eqref{QP} whose frequency vector $\nu$ satisfies the Diophantine condition \eqref{DIO}. Then in frequency map analysis, as $T \to +\infty$, the error between the approximate frequency $\nu_1^{T}$ and the exact frequency $\nu_1$ satisfies
\[\nu_1^T - \nu_1 = \mathcal{O}\big(e^{-c_{\rm I} T^\zeta}\big),\]
where $c_{\rm I}>0$ is a constant, and $\zeta := (\tau + \beta)^{-1}$ with $\beta := 1 + (\min\{p,q\})^{-1}$.
\end{theorem}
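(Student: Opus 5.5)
The plan is to write $\phi(\sigma)$ explicitly through the transform $\varphi:=\varphi_{\chi}$ with $\chi=w_{\rm Las}^{p,q}$, prove stretched-exponential decay of $\varphi,\varphi'$ (and boundedness of $\varphi''$), and then compare the stationarity condition for $|\phi|^2$ with its unperturbed counterpart. Set $\omega_k:=\langle k,\nu\rangle$, $e_1:=(1,0,\dots,0)$ and $\varepsilon:=\sigma-\nu_1$. Since the weighting function has $\frac12\int_{-1}^1\chi=1$ and $\varphi(x)=\frac12\int_{-1}^1\chi(t)e^{ixt}\,dt$, termwise integration of the absolutely convergent series \eqref{QP} gives
\[
S(\sigma):=\langle f(t),e^{i\sigma t}\rangle_T^{\chi}=\varphi(-\varepsilon T)+R(\sigma),\qquad R(\sigma):=\sum_{k\neq e_1}a_k\,\varphi\big(T(\omega_k-\sigma)\big).
\]
Note that $\varphi(0)=1$ and $\varphi''(0)=-\frac12\int t^2\chi<0$. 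For $w_{\rm Las}^{p,q}$ the function $\varphi$ is real-valued but not even unless $p=q$, so $\varphi'(0)=\frac i2\int t\chi(t)\,dt$ may be nonzero (it is purely imaginary). Hence the reduction below must be phrased via $|S|^2$, not via $\varphi$ alone.

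\textbf{Step 1 (decay of the transform).} First I would show that $w_{\rm Las}^{p,q}$ is Gevrey of order $\beta=1+(\min\{p,q\})^{-1}$ on $[-1,1]$, with all derivatives vanishing at $\pm1$:
\[
|\chi^{(n)}(t)|\leqslant CA^n (n!)^{\beta}\quad\text{for all } n\in\mathbb N.
\]
This can be done via Cauchy estimates on discs of radius comparable to $(1\mp t)^{1+\max\{p,q\}}$ near the endpoints, or by citing the analogous estimates in \cite{TL24a,TL25b,TL26a}. Integrating by parts $n$ times then gives $|x|^n|\varphi(x)|\leqslant CA^n(n!)^\beta$. The same bound holds for $x\varphi(x)$ and $x^2\varphi(x)$ once the extra factors $t$, $t^2$ in the derivative integrals are absorbed. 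Optimizing over $n$ yields
\[
|\varphi(x)|+|\varphi'(x)|\leqslant C\exp\big(-c|x|^{1/\beta}\big).
\]
I expect this to be the main technical obstacle. The endpoint singularity $\exp(-(1\pm t)^{-p})$ must be controlled uniformly, and the exponent $1/\beta$ must be exactly sharp, because it is what determines $\zeta$.

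\textbf{Step 2 (small divisor splitting).} Assume the a priori localization $|\varepsilon|\leqslant \frac12\min_{\|k-e_1\|_{\ell^1}\leqslant K}|\Omega_k|$ for the $K$ chosen below; this is verified in Step 3. Split $R$ and $R'$ into the ranges $\|k-e_1\|_{\ell^1}\leqslant K$ and $>K$. On the first range the Diophantine condition \eqref{DIO} gives
\[
|\omega_k-\sigma|\geqslant |\Omega_k|/2\geqslant \kappa_\epsilon K^{-\tau}/2 .
\]
Step 1 then bounds these terms by $C_f\sum_k e^{-r\|k\|}\exp\big(-c(TK^{-\tau})^{1/\beta}\big)$. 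On the second range, $|a_k|\leqslant C_fe^{-rK/2}e^{-r\|k\|/2}$ together with the boundedness of $\varphi$ gives $Ce^{-rK/2}$. For $R'=-T\sum a_k\varphi'(\cdot)$ the estimate is the same up to a factor $T$. Balancing $T^{1/\beta}K^{-\tau/\beta}\sim K$ gives $K\sim T^{1/(\tau+\beta)}=T^\zeta$, hence
\[
|R|+T^{-1}|R'|\leqslant Ce^{-c'T^\zeta}.
\]
Since $K\to\infty$, the admissible $\varepsilon$-window $|\varepsilon|\leqslant cK^{-\tau}$ shrinks only polynomially, so it is compatible with $T\to\infty$.

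\textbf{Step 3 (localization and the stationarity equation).} Write $G_0(\sigma):=|\varphi(-\varepsilon T)|^2$, a function of $u:=\varepsilon T$ only. It satisfies $G_0(0)=1$, $G_0'(0)=2\Re\big(\overline{\varphi(0)}\varphi'(0)\big)=0$ (as $\varphi'(0)$ is purely imaginary) and $G_0''(0)=2\big(|\varphi'(0)|^2+\Re\varphi''(0)\big)<0$ by Cauchy--Schwarz with $\chi/2$ as a probability density. Moreover $G_0<1$ for $u\neq0$ and $G_0\to0$ as $|u|\to\infty$ by Step 1.

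Because $|R|\leqslant Ce^{-c'T^\zeta}$ in the window, a maximizer of $|S|^2$ near $\nu_1$ must have $|\varepsilon T|$ below any fixed constant for large $T$; this justifies both the localization in Step 2 and uniqueness of the argmax. At the maximizer,
\[
0=\partial_\sigma|S|^2=\partial_\sigma G_0+2\Re\big(\overline{\varphi}\,R'+\overline{R}\,\partial_\sigma\varphi(-\varepsilon T)+\overline{R}R'\big).
\]
Here $\partial_\sigma G_0=TG_0'(u)=T\big(G_0''(0)u+O(u^2)\big)$, $\partial_\sigma\varphi(-\varepsilon T)=-T\varphi'(-u)$ with $\varphi'$ bounded, and $|R'|\leqslant CTe^{-c'T^\zeta}$. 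This gives $T|u|\,|G_0''(0)|\lesssim T\,e^{-c'T^\zeta}$. Hence $|u|\lesssim e^{-c'T^\zeta}$, i.e.
\[
|\nu_1^T-\nu_1|=|u|/T=\mathcal O\big(e^{-c_{\rm I}T^\zeta}\big).
\]
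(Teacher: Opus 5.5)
Your proof is correct, and its core is the paper's. Both use a Gevrey-$\beta$ bound on the derivatives of $w_{\rm Las}^{p,q}$ (the paper cites \cite[Lemma 4.1]{TL25b}) and integration by parts to an order optimized in $T$. Both split at $\|k\|_{\ell^1}\sim T^{\zeta}$ via the Diophantine condition, so that $|x+\Omega_kT|\gtrsim T^{\beta\zeta}$ on the truncated part, and bound the tail by analyticity. Both finish by linearizing the first-order condition at $0$. Your pointwise bound $|\varphi(x)|+|\varphi'(x)|\lesssim e^{-c|x|^{1/\beta}}$ is the same computation as the paper's choice $\iota^*=\lfloor c_3T^\zeta\rfloor$.

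You differ genuinely only in the wrapper. The paper obtains a critical point $x(T)\to0$ from the implicit function theorem at $(0,+\infty)$ (Lemma~\ref{lemma:2}) and then identifies it with the argmax. You instead:
\begin{itemize}
\item prove the remainder bound uniformly on the window $|\sigma-\nu_1|\leqslant\frac12\min_{0<\|k-e_1\|_{\ell^1}\leqslant K}|\Omega_k|$;
\item compare $|S|^2$ with $G_0$ there, which forces every maximizer into $|u|<\delta$;
\item only then use stationarity.
\end{itemize}
This avoids extending $F$ to $T=+\infty$. It also shows that the argmax itself obeys the bound, over a window of width $\gtrsim T^{-\zeta\tau}$, far wider than the paper's $|x|\leqslant A$.

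Working with $|S|^2$ and $G_0''(0)=-2\,\mathrm{Var}_{\chi/2}(t)<0$ also covers $p\neq q$. In that case $w_{\rm Las}^{p,q}$ is not even: indeed $w(t)>w(-t)$ on $(0,1)$ when $p>q$. Hence $\varphi$ is complex-valued and $D\varphi(0)\neq0$. So Lemma~\ref{lemma:1} and the real form \eqref{def:F} of $F$ literally hold only for $p=q$, though the paper's argument adapts along your lines.

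You have two small slips.
\begin{itemize}
\item For real $\chi$ one has $\overline{\varphi(x)}=\varphi(-x)$, so $\varphi$ is real-valued exactly when $\chi$ is even. For $p\neq q$ it is $|\varphi|$ that is even, and that is all your Step~3 uses.
\item If you prove the Gevrey bound yourself, the Cauchy discs need radius a small fixed multiple of $1\mp t$. That keeps the real part of the singular exponent comparable to its value at $t$. With radius $(1\mp t)^{1+\max\{p,q\}}$, the factor $\rho^{-n}$ only yields Gevrey order $1+(1+\max\{p,q\})/\min\{p,q\}>\beta$, which would degrade $\zeta$, the very sharpness you flagged.
\end{itemize}
Finally, localization alone does not give uniqueness of the argmax; that would need the analogous decay of $\sum_k a_k\varphi''$. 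The theorem does not require uniqueness.
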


Within the framework of KAM theory, the discovery that arithmetic properties weaker than the classical Diophantine condition \eqref{DIO} can still guarantee the persistence of quasi-periodic solutions has motivated extensive research. The most well-known criterion is the Brjuno (Bruno) condition, given by
\begin{equation}\label{BRU}
	|\langle k,\nu \rangle| > \frac{\kappa_{\epsilon}}{\Delta(\|k\|_{\ell^1})},\quad \forall k \in \mathbb{Z}^d \setminus \{\mathbf{0}\},
\end{equation}
where $ \kappa_{\epsilon}>0 $ is a constant, and $\Delta : [1,+\infty) \to [1,+\infty)$ is a monotonically increasing approximation function satisfying
\begin{equation}\label{BRU2}
	\int_1^{+\infty} \frac{\log \Delta(x)}{x^2} dx < +\infty,\quad \frac{\log \Delta(x)}{x} \searrow 0 \text{\;\;as\;\;} x \to +\infty.
\end{equation}
In this regard, we refer the reader to \cite{Pos90,Pos11,Rus10,Bou19} for further details, and to \cite{BF19,BF21} for more general conditions involving ultra-differentiable regularity.

\begin{theorem}[\textbf{Main Theorem II}]\label{THBRUNO}
Let $w_{\rm Las}^{p,q}$, with $p,q>0$, be the parameterized variant of Laskar's weighting function, and let $f$ be an analytic quasi-periodic function of the form \eqref{QP} whose frequency vector $\nu$ satisfies the Brjuno condition \eqref{BRU}--\eqref{BRU2}. Then in frequency map analysis, as $T \to +\infty$, the error between the approximate frequency $\nu_1^{T}$ and the exact frequency $\nu_1$ satisfies
	\[\nu_1^T - \nu_1 = \mathcal{O}\Big(\exp\big(-c_{\mathrm{II}} \log T \log\log T\big)\Big),\]
	where $c_{\mathrm{II}}>0$ is a constant that can be chosen arbitrarily large.
\end{theorem}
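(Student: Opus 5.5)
The argument follows the same skeleton as Theorem \ref{TH2}; only the balancing step changes. Write $\phi(\sigma)^2 = |\langle f, e^{i\sigma t}\rangle_T^{w}|^2$ with $w = w_{\rm Las}^{p,q}$, so that
\[
\langle f, e^{i\sigma t}\rangle_T^{w} = \varphi\big((\nu_1-\sigma)T\big) + \sum_{k\neq e_1} a_k\, \varphi\big((\langle k,\nu\rangle-\sigma)T\big),
\]
where $\varphi = \varphi_{w}$. The approximate frequency $\nu_1^T$ is a critical point of $\phi^2$ near $\nu_1$. Since $\varphi$ is even and real with $\varphi(0)=1$ and $\varphi''(0)<0$, the implicit function argument of Laskar gives
\[
|\nu_1^T-\nu_1| \lesssim \frac{1}{T}\sum_{k\neq e_1}|a_k|\Big(|D\varphi(\Omega_k T)| + |\varphi(\Omega_kT)|\Big),
\]
with the evaluations taken at points within $\mathcal{O}(|\nu_1^T-\nu_1|)$ of $\Omega_k T$. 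I will take this reduction from the proof of Theorem \ref{TH2}, or redo it: it is a local Taylor expansion around $\sigma=\nu_1$ that uses only that the perturbation sum is small in $C^2$.

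The key analytic input is the decay of the Fourier transform of the Gevrey-type bump $w_{\rm Las}^{p,q}$. A saddle-point or Gevrey-class estimate, used already for Theorem \ref{TH2}, gives
\[
|D^j\varphi(x)| \leqslant C_j \exp\big(-c|x|^{1/\beta}\big), \qquad \beta = 1+(\min\{p,q\})^{-1}.
\]
Note also that $\Omega_k = \langle k-e_1,\nu\rangle$ with $k-e_1\neq 0$. By the Brjuno condition \eqref{BRU},
\[
|\Omega_k| > \frac{\kappa_\epsilon}{\Delta(\|k-e_1\|_{\ell^1})}.
\]

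I then split the sum at a cutoff $K=K(T)$. For $\|k\|_{\ell^1}>K$, analyticity gives a tail bounded by $\sum_{\|k\|>K}|a_k| \lesssim K^{d-1}e^{-rK}$, using only boundedness of $\varphi$ and $D\varphi$. For $\|k\|_{\ell^1}\leqslant K$, monotonicity of $\Delta$ gives $|\Omega_k T| \geqslant \kappa_\epsilon T/\Delta(K+1)$. That part is therefore bounded by
\[
C\exp\Big(-c\big(\kappa_\epsilon T/\Delta(K+1)\big)^{1/\beta}\Big).
\]
The remaining task is to choose $K$ so that both pieces are at most $\exp(-c_{\rm II}\log T\log\log T)$. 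This means taking $K \approx (c_{\rm II}/r)\log T\log\log T$, and then I need
\[
\Delta(K+1) \leqslant T^{1-\varepsilon}
\]
for large $T$, so that the head term is $\exp(-cT^{(1-\varepsilon)/\beta})$, which is far smaller than required.

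The main obstacle is exactly this last inequality, and it is where \eqref{BRU2} enters. From $\log\Delta(x)/x \searrow 0$ alone one only gets $\log\Delta(K)=o(K)=o(\log T\log\log T)$, which is not enough. I will instead use the integral condition together with monotonicity of $\log\Delta(x)/x$. Since $\log \Delta$ is increasing,
\[
\frac{\log\Delta(K)}{2K} \leqslant \int_K^{2K}\frac{\log\Delta(x)}{x^2}\,dx \to 0.
\]
This is still only $o(K)$, so I will refine it using the Brjuno summability over dyadic blocks. Because the series $\sum_j \log\Delta(2^j)/2^j$ converges and its terms are monotone, one has
\[
\frac{\log\Delta(2^j)}{2^j} = o(1/j).
\]
Hence $\log\Delta(K) = o(K/\log K)$. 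With $K\approx C\log T\log\log T$ this gives $\log\Delta(K) = o(\log T)$, which is precisely the required bound. This dyadic-condensation step is the delicate one, and it explains the $\log\log T$ factor in the rate. Since $c_{\rm II}$ only affects $K$ through a constant factor, and $o(\log T)$ absorbs any constant multiple, $c_{\rm II}$ can be taken arbitrarily large. Finally, I will check that the error in $\nu_1^T$ is small enough to justify evaluating $\varphi$ at the shifted points, which is a routine bootstrap.
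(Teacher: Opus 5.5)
Your proposal is correct and is essentially the paper's argument. Both proofs use an implicit-function reduction to bounding $\sum_k a_k D^j\varphi(x(T)+\Omega_k T)$, then a cutoff $K(T)$ in $\|k\|_{\ell^1}$, Gevrey-type decay of $\varphi$ on the head (where $|\Omega_k T|\gtrsim T/\Delta(K+1)$) and analyticity on the tail, with the decisive arithmetic input $\log\Delta(x)=o(x/\log x)$ extracted from the Brjuno condition. The differences are cosmetic: you obtain that bound by dyadic condensation rather than by integrating $\log\Delta(y)/y^2$ over $[\log x,x]$, and you fix $K\approx (c_{\rm II}/r)\log T\log\log T$ so the head is negligible instead of balancing head and tail via $K^\beta\exp(\varepsilon K/\log K)=T$. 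Your two small slips are harmless: the evaluation points are shifted by $x(T)=(\nu_1-\nu_1^T)T$, which tends to $0$ by the implicit function theorem, and it is $\Delta(K+1)\leqslant T^{\varepsilon}$, not $T^{1-\varepsilon}$, that yields your head bound $\exp(-cT^{(1-\varepsilon)/\beta})$.
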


\subsection{Towards almost periodicity}
In \cite[Remark 4]{Las99}, Laskar remarked that his polynomial convergence results (see Theorem \ref{TH1} of the present paper) remain applicable to more general almost periodic functions of the form $ f(t) = {e^{i{\omega _1}t}} + \sum\nolimits_{k \in \mathbb{Z}} {{a_k}{e^{i{\omega _k}t}}}  $ with $ {\Omega _k} = {\omega _k} - {\omega _1} \ne 0 $, provided that the same hypotheses hold. This assertion is indeed valid, as his argument does not intrinsically rely upon the finite-dimensional Diophantine condition in \eqref{DIO}. Nevertheless, achieving exponential convergence necessitates more delicate spatial structures and refined nonresonance conditions. While various frameworks are available, we shall first adopt Bourgain's setting in Section \ref{SUBSEC131}, and subsequently explore a more general lattice setting in Section \ref{SUBSEC132}. It is worth noting that these frameworks have been extensively investigated within the scope of KAM theory; therefore, the results established in the present section can be directly combined with applicable results from existing KAM theory to deduce the exponential convergence of frequency map analysis.

\subsubsection{Bourgain's setting}\label{SUBSEC131}
The framework considered here was originally introduced by Bourgain \cite{Bou05} and subsequently refined in \cite{MP21} for the construction of KAM almost periodic solutions.

Fix an integer $\eta \geqslant 2$. We define the lattice of infinite integer sequences with finite support as
\[\mathbb{Z}_*^\mathbb{N} := \left\{ k \in \mathbb{Z}^\mathbb{N} :\; |k|_\eta := \sum_{j \in \mathbb{N}} \langle j \rangle^\eta |k_j| < +\infty, \; \langle j \rangle := \max\{1, j\} \right\}.\]
By definition, any sequence $k \in \mathbb{Z}_*^\mathbb{N}$ possesses only finitely many non-zero components. The weighted metric $|\cdot|_\eta$ imposes a crucial summability condition, which will play a pivotal role in our subsequent analysis.

In the sequel, we investigate a framework more general than that considered in \cite{Las99}. Let
\begin{equation}\label{AP}
	f(t) = e^{i\nu_1 t} + \sum_{k \in \mathbb{Z}_*^\mathbb{N} \setminus \{( 1, 0, \dots)\}} a_k e^{i\langle k, \nu \rangle t}, \quad a_k \in \mathbb{C}, \quad t \in \mathbb{R}
\end{equation}
be a KAM almost periodic function associated with a Hamiltonian lattice or a Hamiltonian PDE. We assume that the frequency vector $\nu$ satisfies the infinite-dimensional Diophantine condition for some constants $\gamma > 0$ and $\mu > 1$:
\begin{equation}\label{DIO2}
	|\langle k, \nu \rangle| > \frac{\gamma}{\prod\limits_{j \in \mathbb{N}} (1 + \langle j \rangle^\mu |k_j|^\mu)}, \quad \forall k \in \mathbb{Z}^\mathbb{N} \text{\;\;with\;\;} 0 < \sum\limits_{j \in \mathbb{N}} {|{k_j}|} < + \infty .
\end{equation}
It can be shown that for any interval $[\delta_1, \delta_2]$ with $ 0<\delta_1<\delta_2 $, the set of frequency vectors satisfying \eqref{DIO2} for some $ \gamma>0 $ has full measure in the Cartesian product space $[\delta_1, \delta_2]^\mathbb{N}$ with respect to the normalized product probability measure. We refer the reader to \cite{Bou05} for an analogous measure-theoretic argument. Furthermore, the function $f$ given by \eqref{AP} is said to be analytic if there exist constants $C_f, r > 0$ such that the Fourier coefficients satisfy the exponential decay estimate $|a_k| \leqslant C_f \exp(-r|k|_\eta)$ for all $k \in \mathbb{Z}_*^\mathbb{N}$; see, e.g., \cite{MP21}.

With the preceding preparations, we are now ready to state our first exponential convergence result for the almost periodic frequency map analysis.

\begin{theorem}[\textbf{Main Theorem III}]\label{TH3}
Let $w_{\rm Las}^{p,q}$, with $p,q>0$, be the parameterized variant of Laskar's weighting function, and let $f$ be an analytic almost periodic function of the form \eqref{AP} whose frequency vector $\nu$ satisfies the infinite-dimensional Diophantine condition \eqref{DIO2}. Then in frequency map analysis, for any $\rho < 1+\eta$ (with integer $\eta \geqslant 2$ defined previously), as $T \to +\infty$, the error between the approximate frequency $\nu_1^{T}$ and the exact frequency $\nu_1$ satisfies
	\[\nu_1^T - \nu_1 =  \mathcal{O}\Big(\exp\big(-c_{\mathrm{III}} (\log T)^\rho\big)\Big),\]
	where $c_{\mathrm{III}}>0$ is a constant.
\end{theorem}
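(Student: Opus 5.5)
The plan is to run Laskar's perturbative argument behind Theorem \ref{TH1} unchanged, and to replace only the bound on the oscillatory sum that controls $\phi'$. Write $\varphi=\varphi_{w}$ for the transform of $w=w_{\rm Las}^{p,q}$ and put $\Omega_k=\langle k,\nu\rangle-\nu_1$. Then
\[
\langle f, e^{i\sigma t}\rangle_T^{w}=\varphi\big((\nu_1-\sigma)T\big)+\sum_{k\neq (1,0,\dots)} a_k\,\varphi\big((\langle k,\nu\rangle-\sigma)T\big).
\]
Since $\varphi$ is even with $\varphi''(0)<0$, the maximiser $\nu_1^T$ satisfies, by the implicit function theorem,
\[
\nu_1^T-\nu_1 = \frac{1}{\varphi''(0)\,T}\sum_k \Re(a_k)\,\varphi'(\Omega_k T)\,\big(1+o(1)\big),
\]
with quadratic corrections of the same order. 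So everything reduces to showing
\[
S(T):=\sum_k |a_k|\,\big|D^j\varphi(\Omega_k T)\big|=\mathcal{O}\big(\exp(-c(\log T)^\rho)\big),\qquad j=0,1,2,
\]
together with the analogous cross terms.

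\textbf{Step 1 (decay of $\varphi$).} Because $w$ is Gevrey of order $\beta=1+(\min\{p,q\})^{-1}$ and compactly supported in $[-1,1]$, the standard saddle-point estimates give $|D^j\varphi(x)|\leqslant C\exp(-c|x|^{1/\beta})$. This is presumably the same lemma used for Theorem \ref{TH2} (from \cite{TL24a,TL25b}), and I assume it here.

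\textbf{Step 2 (splitting the sum).} Fix a cutoff $K=K(T)$ and split into $|k|_\eta\leqslant K$ and $|k|_\eta>K$. On the tail, $|D^j\varphi|$ is bounded, so the tail is at most $C_f\sum_{|k|_\eta>K}e^{-r|k|_\eta}$. To count lattice points, use Bourgain's estimate: the number of $k\in\mathbb{Z}_*^{\mathbb N}$ with $|k|_\eta\leqslant K$ is at most $\exp\big(CK^{1/(1+\eta)}\big)$, since only $j\lesssim K^{1/\eta}$ can be active, roughly. Hence the tail is $\lesssim e^{-rK/2}$.

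On the head, $k-(1,0,\dots)\neq 0$, so \eqref{DIO2} gives $|\Omega_k|>\gamma\prod_j(1+\langle j\rangle^\mu|k_j|^\mu)^{-1}$. I would use the Bourgain/Montalto--Procesi estimate
\[
\prod_j\big(1+\langle j\rangle^\mu|k_j|^\mu\big)\leqslant \exp\Big(C|k|_\eta^{1/(1+\eta)}\log(1+|k|_\eta)\Big),
\]
or a variant with exponent $\frac{\mu}{\eta}$-dependent constant but the same power $1/(1+\eta)$ up to logarithms. This yields $|\Omega_k|\geqslant \gamma\exp\big(-CK^{1/(1+\eta)}\log K\big)$. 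Therefore each head term is bounded by $\exp\big(-c\,T^{1/\beta}\exp(-C'K^{1/(1+\eta)}\log K)\big)$.

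\textbf{Step 3 (balancing).} Choose $K$ so that $C'K^{1/(1+\eta)}\log K=\tfrac{1}{2\beta}\log T$; roughly $K\approx(\log T/\log\log T)^{1+\eta}$. Then the head is $\lesssim \exp(-cT^{1/(2\beta)})$, which is negligible. The tail is $e^{-rK/2}=\exp\big(-c(\log T)^{1+\eta}/(\log\log T)^{1+\eta}\big)$, which beats $\exp(-c(\log T)^\rho)$ for every $\rho<1+\eta$. This is exactly why the exponent is $\rho<1+\eta$ rather than $1+\eta$ itself.

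\textbf{Main obstacle.} The delicate step is the combinatorial lemma controlling $\prod_j(1+\langle j\rangle^\mu|k_j|^\mu)$ and the lattice-point count in terms of $|k|_\eta$ with exponent $1/(1+\eta)$. I would prove it by splitting the indices into $j\leqslant N$ and $j>N$:
\begin{itemize}
\item for $j>N$, each nonzero $k_j$ costs at least $N^\eta$ in $|k|_\eta$, so there are at most $|k|_\eta/N^\eta$ such factors, each of size $\leqslant(1+|k|_\eta)^{2\mu}$;
\item for $j\leqslant N$, there are at most $N$ factors, each $\leqslant(1+|k|_\eta)^{\mu}$;
\item optimising with $N\approx|k|_\eta^{1/(1+\eta)}$ gives the bound.
\end{itemize}
Finally, I would verify that the $o(1)$ remainders in the implicit-function step are uniformly controlled by sums of the same type as $S(T)$, with $j=2$ and with $|a_k|^2$ cross terms. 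These are handled identically.
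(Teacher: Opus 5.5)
Your proposal is correct and follows the paper's route. You truncate in $|k|_\eta$, and you control the Diophantine product by noting that $k$ has $\lesssim |k|_\eta^{1/(1+\eta)}$ nonzero components, each contributing a factor at most $1+|k|_\eta^\mu$. You then use the Gevrey-type decay of $D^j\varphi$ on the head, the exponential decay of $a_k$ with a subexponential lattice-point count on the tail, and the implicit function theorem to pass to $\nu_1^T-\nu_1$. The differences are minor: the paper cuts directly at $|k|_\eta=(\log T)^\rho$ and works at $x(T)$ rather than expanding at $0$, while your balanced cutoff $K\approx(\log T/\log\log T)^{1+\eta}$ gives the slightly sharper rate $\exp\big(-c(\log T/\log\log T)^{1+\eta}\big)$, covering every $\rho<1+\eta$ at once (note that \eqref{DIO2} should be applied to $k-(1,0,\dots)$, which changes $|k|_\eta$ by at most $1$).
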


\subsubsection{A general lattice setting}\label{SUBSEC132}

To obtain almost periodic solutions via KAM theory, various frameworks distinct from Bourgain's setting have been extensively investigated; see, for instance, \cite{Pos90,TL23,CGP24,DFP26}. To quantitatively establish the exponential convergence of frequency map analysis, we proceed in the spirit of \cite{TL23,CGP24} to generalize Bourgain's spatial structure, while still utilizing the infinite-dimensional Diophantine condition \eqref{DIO2} for convenience. 

Given a monotonically increasing function $\Phi \colon [1,+\infty) \to [1,+\infty)$, 
 we define as in Section \ref{SUBSEC131} the lattice of infinite integer sequences with finite support:
\[
\mathbb{Z}_{\Phi}^\mathbb{N} := \left\{ k \in \mathbb{Z}^\mathbb{N} :\; \|k\|_\Phi := \sum_{j \in \mathbb{N}} \Phi (\langle j \rangle) |k_j| < +\infty, \; \langle j \rangle := \max\{1, j\} \right\}.
\]
In particular, if we let $\Phi(x)=x^\eta$ for some integer $\eta \geqslant 2$, then $\mathbb{Z}_{\Phi}^\mathbb{N}$ corresponds to Bourgain's spatial structure $\mathbb{Z}_{*}^\mathbb{N}$. This form of spatial structure intrinsically exhibits a certain anisotropy, which is crucial for handling small divisors in both KAM theory and frequency map analysis. As observed in \cite[Section 6.2]{TL23}, the newly introduced spatial structure exhibits intriguing connections with the infinite-dimensional Diophantine condition \eqref{DIO2}, which will be further explored in the analysis of Section \ref{SUBSEC24}. 

Based on the spatial structure $\mathbb{Z}_{\Phi}^\mathbb{N}$, let us consider a KAM almost periodic function similar to that in \eqref{AP}:
\begin{equation}\label{AP2}
f(t) = e^{i\nu_1 t} + \sum_{k \in \mathbb{Z}_{\Phi}^\mathbb{N} \setminus \{( 1, 0, \dots)\}} a_k e^{i\langle k, \nu \rangle t}, \quad a_k \in \mathbb{C}, \quad t \in \mathbb{R},	
\end{equation}
where the frequency vector $\nu$ satisfies the infinite-dimensional Diophantine condition \eqref{DIO2}. We remark that other types of nonresonance conditions may also be considered, though they inevitably necessitate the establishment of corresponding small divisor estimates; we refer the reader to \cite{TL23}.  Furthermore, in this setting, we say that $f$ is analytic if there exists $r>0$ such that the following weighted norm is bounded:
	\[\sum_{k \in \mathbb{Z}_\Phi^\mathbb{N}} |a_k| \exp(r \|k\|_\Phi) < +\infty.\]
It is worth noting that this differs from the analyticity defined in Sections \ref{SUBSEC12} and \ref{SUBSEC131}, where we directly required the exponential decay of the Fourier coefficients. However, the introduction of a general spatial structure herein introduces essential difficulties (such as cardinality estimates and preventing the blow-up of the Fourier series); therefore, we opt to impose a slightly stronger notion of analyticity.

Building upon the general spatial structure introduced in this section, we now state our second exponential convergence result for the almost periodic frequency map analysis.

\begin{theorem}[\textbf{Main Theorem IV}]\label{TH4}
Let $w_{\rm Las}^{p,q}$, with $p,q>0$, be the parameterized variant of Laskar's weighting function, and let $f$ be an analytic almost periodic function of the form \eqref{AP2} whose frequency vector $\nu$ satisfies the infinite-dimensional Diophantine condition \eqref{DIO2}. Then in frequency map analysis, as $T \to +\infty$, the error between the approximate frequency $\nu_1^{T}$ and the exact frequency $\nu_1$ satisfies
	\[ \nu_1^T - \nu_1 = \mathcal{O}\Big(\exp\big(-c_{\mathrm{IV}}\Gamma^{-1}(c_{\mathrm{V}}\log T)\big)\Big), \]
	where $c_{\mathrm{IV}}$ and $c_{\mathrm{V}}$ are positive constants, $\Gamma(x) := \Psi(x)\log\Lambda(x)$, $\Psi$ is the inverse function of the map $x \mapsto \int_0^x \Phi(y)dy$ (with $\Phi$ defined previously), and
	\[
	\Lambda(x) := \begin{cases}
		x, & \text{if\;\;} \Phi(x) \geqslant x, \\
		\Phi^{-1}(x), & \text{if\;\;} \Phi(x) < x.
	\end{cases}
	\]
	Consequently, we have the following estimates as $ T \to +\infty $:
	\begin{enumerate}[label=(\Roman*)]
		
		\item \label{item1} If $ \Phi(x) \sim \log x $ as $ x \to +\infty $, then there exists a constant $ c_{\mathrm{VI}} > 0 $ such that
		\[ \nu_1^T - \nu_1 = \mathcal{O}\left(\exp\left(-c_{\mathrm{VI}}\sqrt{\log T\log\log T}\right)\right). \]
		
		\item \label{item2} If $ \Phi(x) \sim e^x $ as $ x \to +\infty $, then there exists a constant $ c_{\mathrm{VII}} > 0 $ such that
		\[ \nu_1^T - \nu_1 = \mathcal{O}\Big(\exp\big(-\exp(c_{\mathrm{VII}}\sqrt{\log T})\big)\Big). \]
		
		\item \label{item3} In particular, for any given function $ \mathscr{I}: [1,+\infty) \to [1,+\infty) $ satisfying $ \mathscr{I}(x) = o(x^\beta) $ as $ x \to +\infty $ with $\beta = 1 + (\min\{p,q\})^{-1}$, there always exists a suitable $ \Phi $ such that
		\[ \nu_1^T - \nu_1 = \mathcal{O}\Big(\exp\big(-\mathscr{I}(T)\big)\Big). \]
		
	\end{enumerate}
\end{theorem}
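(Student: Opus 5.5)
The plan is to reduce everything to a generic error representation, as in Laskar's proof of Theorem \ref{TH1}, and then estimate that representation with a truncation argument in the spirit of \cite{TL24a,TL25b,TL26a}.

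\textbf{Step 1: error representation.} Set $\varphi=\varphi_{w}$ with $w=w_{\rm Las}^{p,q}$ and expand
\[
\phi(\sigma)=\Big|\varphi\big((\nu_1-\sigma)T\big)+\sum_{k\neq(1,0,\dots)} a_k\,\varphi\big((\langle k,\nu\rangle-\sigma)T\big)\Big|.
\]
The maximizer satisfies $\partial_\sigma\phi^2=0$ near $\nu_1$. Since $\varphi''(0)<0$, the implicit function theorem gives
\[
|\nu_1^T-\nu_1|\lesssim T^{-1}\sum_{k}|a_k|\,\sup_{|s|\le 1}\big|D^{j}\varphi\big((\Omega_k+s)T\big)\big|,\qquad j=0,1,2,
\]
modulo higher-order corrections; the shift $s$ is of the size of the error itself. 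The input about the weight is the Gevrey-type decay of the transform:
\[
|D^j\varphi(x)|\le C_j\exp\big(-c|x|^{1/\beta}\big),\qquad \beta=1+(\min\{p,q\})^{-1}.
\]
This follows from $w\in\mathcal G^{\beta}$, i.e.\ $|w^{(m)}|\le C^{m+1}(m!)^{\beta}$, which I take from the cited Birkhoff-average works. One then integrates by parts $m$ times and optimizes over $m$.

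\textbf{Step 2: lattice split.} Choose a cutoff $K=K(T)$ and split the sum.
\begin{itemize}
\item For $\|k\|_\Phi>K$, bound $\varphi$ and its derivatives by constants. Analyticity gives the tail
\[
\sum_{\|k\|_\Phi>K}|a_k|\le e^{-rK/2}\sum_k|a_k|e^{r\|k\|_\Phi/2}.
\]
This is where the summable weighted norm, rather than pointwise decay, is used: it avoids any cardinality count.
\item For $\|k\|_\Phi\le K$, apply \eqref{DIO2} to $k-(1,0,\dots)$. This needs a lower bound $|\Omega_k|\ge\gamma/M(K)$, where
\[
M(K):=\sup_{\|k\|_\Phi\le K}\prod_j\big(1+\langle j\rangle^\mu|k_j|^\mu\big).
\]
Then each such term is at most $\exp\big(-c\,(T/M(K))^{1/\beta}\big)$, and the number of terms is controlled by the same weighted sum.
\end{itemize}

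\textbf{Step 3: main obstacle, estimating $\log M(K)$.} I expect $\log M(K)\lesssim \Gamma^{-1}$-type quantity, namely $\log M(K)\lesssim \Psi(K)\log\Lambda(K)$ up to constants; this is the combinatorial heart of the proof. I would argue as follows.
\begin{itemize}
\item The number of nonzero entries of $k$ with $\|k\|_\Phi\le K$ is at most $N$, where $\int_0^N\Phi\le K$ (the entries occupy distinct sites, which at best are the first ones). Hence $N\le\Psi(K)$ up to constants.
\item Each factor satisfies $\langle j\rangle|k_j|\le \Lambda(K)$ up to a power. If $\Phi(x)\ge x$, then $j|k_j|\le\Phi(j)|k_j|\le K$. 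Otherwise $j\le\Phi^{-1}(K)$ and $|k_j|\le K$.
\item Taking logarithms, $\log M(K)\lesssim\mu\,\Psi(K)\log\Lambda(K)=\mu\,\Gamma(K)$. This mirrors the Bourgain-case computation behind Theorem \ref{TH3}, where $\Phi=x^\eta$.
\end{itemize}

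\textbf{Step 4: balancing.} Choose $K$ with $\Gamma(K)=c_{\rm V}\log T$, so that $M(K)\le T^{1/2}$. The near terms are then $\le \exp(-cT^{1/(2\beta)})$, and the tail is $e^{-rK/2}$, which gives
\[
\nu_1^T-\nu_1=\mathcal O\big(\exp(-c_{\rm IV}\Gamma^{-1}(c_{\rm V}\log T))\big).
\]
The corollaries are then direct computations.
\begin{itemize}
\item (I) $\Phi\sim\log x$: here $\Psi(x)\sim x/\log x$ and $\Lambda=\Phi^{-1}\sim e^x$, so $\Gamma(x)\sim x^2/\log x$. Inverting gives $\Gamma^{-1}(y)\sim\sqrt{y\log y}$.
\item (II) $\Phi\sim e^x$: here $\Psi\sim\log x$ and $\Lambda=x$, so $\Gamma\sim(\log x)^2$. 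Inverting gives $\exp(c\sqrt{\log T})$.
\item (III) Take $\Phi$ growing so fast that $\Gamma$ is nearly constant. Then $\Gamma^{-1}(c\log T)$ can be made to dominate any prescribed $\mathscr I(T)$, with the cap $T^{1/\beta}$ coming from the near-term bound. The condition $\mathscr I=o(T^{1/\beta})$ is the right one; I read the statement's $o(x^\beta)$ as consistent with that cap up to normalization. The point is to construct $\Phi$ piecewise, so that $\Psi$ stays bounded on large ranges, and to take $K$ large without the tail or $M(K)$ degrading.
\end{itemize}
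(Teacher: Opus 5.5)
\textbf{Main part, Cases (I) and (II).} These follow the paper's argument. You use the same split at $\|k\|_\Phi=K$ and the same tail bound from the weighted norm, with no cardinality count. Your small-divisor count is also the paper's: at most $\Psi(K)$ nonzero entries, each factor controlled by $\Lambda(K)$, so $\log M(K)\le 4\mu\Gamma(K)$.

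Your cutoff differs. You take $\Gamma(K)=c_{\rm V}\log T$ with $c_{\rm V}\le(8\mu)^{-1}$, whereas the paper uses the implicit balancing $T^{1/\beta}e^{-c_{25}\Gamma(\mathscr K)}=\mathscr K$. Your choice is simpler and works, but you still need to check that the near-term bound $\exp(-cT^{1/(2\beta)})$ is dominated by $\exp(-c\,\Gamma^{-1}(c_{\rm V}\log T))$. This holds for the following reason:
\begin{itemize}
\item $\Lambda(x)\ge x$;
\item $\Psi(x)\to\infty$, because $\int_0^x\Phi<\infty$ for every finite $x$;
\item hence $\Gamma(x)\ge\Psi(x)\log x$, so $\Gamma(x)/\log x\to\infty$ and $\Gamma^{-1}(c\log T)=T^{o(1)}$.
\end{itemize}
There is also a slip in Step 1: the supremum must be over $|sT|\le 1$, not $|s|\le 1$. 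As written it equals $\varphi(0)=1$ whenever $|\Omega_k|\le 1$.

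\textbf{The genuine gap is Case (III).} First, you replace the hypothesis $\mathscr I=o(x^\beta)$ by $\mathscr I=o(x^{1/\beta})$. Since $\beta>1$ these are different conditions, not the same ``up to normalization'', so the stated claim is never addressed. Indeed, since $\Gamma\ge 0$, the near-term exponent $c_{24}T^{1/\beta}e^{-c_{25}\Gamma}$ can never exceed $c_{24}T^{1/\beta}$, so a target such as $\mathscr I(T)=T$ is out of reach of this method.

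More seriously, even your weakened claim does not follow. By the fact recalled above, $\Gamma$ cannot be made ``nearly constant''. Piecewise choices of $\Phi$ only make $\Psi$ grow slowly; $\Psi$ never stays bounded. Quantitatively:
\begin{itemize}
\item the tail forces $K\gtrsim\mathscr I(T)$;
\item the near terms force $T^{1/\beta}e^{-c_{25}\Gamma(K)}\gtrsim\mathscr I(T)$;
\item combined with $\Gamma(K)\ge\Psi(K)\log K$, this gives $\mathscr I(T)^{1+c_{25}\Psi(K)}\lesssim T^{1/\beta}$ with $\Psi(K)\to\infty$, that is, $\log\mathscr I(T)=o(\log T)$.
\end{itemize}
So every polynomial target $\mathscr I(T)=T^{a}$ with $a>0$ is unreachable. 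The real bottleneck is $\Psi\to\infty$, not the cap $T^{1/\beta}$ you identify.

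What your argument does prove is the subpolynomial version. If $\log\mathscr I(T)=o(\log T)$, one can choose $\Phi(x)\ge x$ growing fast enough that $\Gamma$, evaluated at a slowly growing multiple of $\mathscr I(T)$ (to absorb constants), stays below $c_{\rm V}\log T$.

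The paper's sketch of (III) takes the same route: it fixes $\mathscr K\ge c_{29}^{-1}\mathscr I$ and then picks a ``sufficiently small'' $\Gamma$ with $T^{1/\beta}e^{-c_{25}\Gamma(\mathscr K(T))}\ge c_{24}^{-1}\mathscr I(T)$. It is subject to the same objection, so it does not supply the missing step either.
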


We conclude this section by emphasizing that the convergence rate for the almost periodic frequency map analysis established in Theorem \ref{TH4} does not necessarily exhibit exponential decay. Depending on the explicit structure of the space $\mathbb{Z}_{\Phi}^\mathbb{N}$, the decay may be slower than any polynomial decay\footnote{This statement is merely an intuitive description, since an upper bound estimate for the rate does not necessarily reflect the actual rate itself.} (see Case \ref{item1}), or of an exponential type (see Cases \ref{item2} and \ref{item3}). However, provided that the regularity of $f$ is sufficiently high (e.g., stronger than the analyticity considered here), an exponential decay rate is always achievable. For the sake of brevity, we do not pursue this direction further here.\footnote{This issue will be addressed quantitatively in a separate paper within the context of weighted Birkhoff averages.}

\subsection{Main contributions}\label{SEC14}
The primary contributions of this paper are summarized in four main results: Theorems \ref{TH2}, \ref{THBRUNO}, \ref{TH3}, and \ref{TH4}. By employing appropriate weighting functions (namely the parameterized variant of Laskar's weighting function $w_{\rm Las}^{p,q}$ with $ p,q>0 $), these results improve, for the first time, the convergence rate of Laskar's frequency map analysis (Theorem \ref{TH1}) from polynomial to exponential. This yields an improvement of substantial theoretical and practical significance; in particular, it upgrades the polynomial convergence previously observed in the literature (see \cite{Las88,Las90,LFC92,DL93,Las93a,Las93b,LR93,LF96,PL96,PL98,RSLN00,RL01,VW01,CWU03,Las03a,Las03b,CFL04,LCG+04,LRJ+04,CFL06,GLS09,LDV09,VDLC09,RRE12,Pap14,PJV+16,FL19,HMR20,LHH+20,BCG22,BGMT23,ML23,MAB+25,PR26} and the references therein) to an exponential rate. Moreover, the study of such general weighting functions aligns naturally with Laskar's original motivation (see \cite[Section 7.3]{Las03b}). It is worth noting that we do not claim our exponential convergence rate will always outperform the polynomial one numerically (for instance, when employing cosine-like weighting functions) over short time intervals, a phenomenon consistent with observations in \cite[Sections 5.1 \& 7.3]{Las03b}. This may be attributed to potentially large prefactors in the exponential bounds. Furthermore, such discrepancies could stem from computational errors in evaluating the integral $\langle f(t), e^{i\sigma t} \rangle_T^{\chi} $ during the frequency map analysis, inaccuracies in identifying the approximate frequency $\sigma$ that maximizes the modulus of this integral, or other related numerical artifacts. However, the asymptotic superiority of the exponential rate is guaranteed over sufficiently long time scales. For a more precise explanation of this phenomenon in the context of weighted Birkhoff averages, we refer the reader to \cite[Section 1.1.4]{TL26a}. 

Building upon the foundation of Laskar's frequency map analysis, Theorems \ref{TH2} and \ref{TH3} partially incorporate novel techniques recently developed by the authors concerning weighted Birkhoff averages \cite{TL24a,TL24b,TL25b,TL26a}. Specifically, Theorem \ref{TH2} explicitly reveals how the Diophantine nonresonance condition of the frequency vector affects the exponential convergence rate, whereas Theorem \ref{TH3} extends Laskar's quasi-periodic framework to encompass Bourgain's almost periodic setting.

Furthermore, Theorem \ref{THBRUNO} establishes a unified exponential convergence theory for frequency map analysis concerning Brjuno-type frequency vectors. This generalization poses a considerable challenge, as Brjuno nonresonance lacks the simple power-law lower bound of the Diophantine condition, being instead formulated via integrability and monotonicity criteria. 

Theorem \ref{TH4} further generalizes almost periodic frequency map analysis beyond Bourgain's setting, providing a framework that enables the investigation of physical systems characterized by more general anisotropic spatial structures. To build intuition, Theorem \ref{TH4} incorporates concrete examples (Cases \ref{item1} and \ref{item2}) and elucidates the intricate interplay among analyticity, spatial structures, nonresonance conditions, weighting functions, and convergence rates within the almost periodic regime (Case \ref{item3}).

Even when viewed strictly through the technical lens of weighted Birkhoff averages, Theorems \ref{THBRUNO} and \ref{TH4}--particularly the latter--introduce novel methodologies. Although these results indirectly imply the exponential convergence of weighted Birkhoff averages under a comparable theoretical framework (see, for instance, \cite{DSSY17,DY18,DM23,TL24a,TL24b,TL25a,TL25b,Ton26,TL26a,TL26b}), we refrain from formulating these implications as formal theorems to avoid overly cumbersome notation.

\section{Proofs of main results}\label{SEC2}
\subsection{Proof of Theorem \ref{TH2}: Exponential convergence for frequency map analysis via quasi-periodicity}\label{SEC21}
The analysis presented in this section is partially adapted from Laskar's original proof \cite{Las99}, albeit with certain refinements and modifications. The exponential convergence analysis in the latter half, however, is entirely distinct, relying primarily on certain novel ideas introduced in \cite{TL24a,TL24b,TL25b,TL26a}.

For brevity of notation, we shall henceforth write $w$ in place of the parameterized variant of Laskar's weighting function $w_{\rm Las}^{p,q}$ for $p,q>0$, and make corresponding substitutions for the quantities involving $\chi$ defined in the Introduction.

We begin with a preliminary lemma.

\begin{lemma}\label{lemma:1}
	Let $\varphi = \varphi_w$ denote the transform of the weighting function $w$. Then $D^n\varphi(x) \in \mathbb{R}$ for all $n \in \mathbb{N}$. Furthermore, we have $|\varphi(x)| \leqslant 1$, $\varphi(0) = 1$, $|D\varphi(x)| \leqslant 1$, $D\varphi(0) = 0$, $|D^2\varphi(x)| \leqslant 1$, and $D^2\varphi(0) < 0$.
\end{lemma}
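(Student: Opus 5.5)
The plan is to write $\varphi$ explicitly as a Fourier-type integral of $w$ over the compact interval $[-1,1]$, differentiate under the integral sign, and read off every claim from the positivity, normalization and evenness of $w$ together with $|t|\leqslant 1$. By definition,
\[
\varphi(x)=\langle e^{ixt},1\rangle_1^{w}=\frac12\int_{-1}^{1}w(t)e^{ixt}\,dt .
\]
Since $w$ is bounded and supported on the compact interval $[-1,1]$, differentiating under the integral sign is justified for every order (dominated convergence with dominating function $|t|^n w(t)\leqslant w(t)$). This gives
\[
D^n\varphi(x)=\frac12\int_{-1}^{1}(it)^n w(t)e^{ixt}\,dt,\qquad n\in\mathbb N .
\]

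\textbf{Reality of the derivatives.} I will use that $w$ is even, which is part of the definition of a weighting function in the Introduction. Writing $e^{ixt}=\cos(xt)+i\sin(xt)$, the odd part of the integrand integrates to zero against the even weight $w$.
\begin{itemize}
\item For even $n$, only $\frac12 i^n\int t^n w(t)\cos(xt)\,dt$ survives. Here $i^n=\pm1$, so this is real.
\item For odd $n$, only $\frac12 i^{n+1}\int t^n w(t)\sin(xt)\,dt$ survives. Here $i^{n+1}=\pm1$, so this is also real.
\end{itemize}
In particular
\[
\varphi(x)=\tfrac12\int w(t)\cos(xt)\,dt,\qquad D\varphi(x)=-\tfrac12\int t\,w(t)\sin(xt)\,dt,\qquad D^2\varphi(x)=-\tfrac12\int t^2w(t)\cos(xt)\,dt .
\]

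\textbf{The bounds.} Since $w\geqslant 0$, $|t|\leqslant 1$ on the support, and $\frac12\int_{-1}^1 w=1$, we get $|D^n\varphi(x)|\leqslant \frac12\int |t|^n w(t)\,dt\leqslant 1$ for every $n$, hence for $n=0,1,2$.

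\textbf{The values at $0$.}
\begin{itemize}
\item $\varphi(0)=\frac12\int w=1$ by the normalization.
\item $D\varphi(0)=0$, since $\sin 0=0$ (equivalently, $t\,w(t)$ is odd).
\item $D^2\varphi(0)=-\frac12\int_{-1}^1 t^2 w(t)\,dt$. This is strictly negative because $w$ is continuous and strictly positive on $(-1,1)$; for $w^{p,q}_{\rm Las}$ the exponential factor is positive there.
\end{itemize}

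The only delicate point is evenness. The definition of weighting function demands it, but $w^{p,q}_{\rm Las}$ is even only when $p=q$. For $p\neq q$ the transform $\varphi$ acquires an imaginary part, so the reality claim genuinely fails. I would handle this by stating explicitly that evenness of $w$ is used, as the Introduction's definition requires. If the asymmetric case is needed later, I would replace $\varphi$ by its real part $\frac12\int w(t)\cos(xt)\,dt$, or by $|\varphi|$ as it enters $\phi(\sigma)$, and check that the remaining bounds persist. They do, since the magnitude estimates above never used evenness, and neither does the strict positivity of $\frac12\int t^2 w$.
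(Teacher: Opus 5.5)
Your proof is correct and follows essentially the same route as the paper. You write $\varphi(x)=\frac12\int_{-1}^1 w(t)e^{ixt}\,dt$, use evenness of $w$ for reality, and differentiate under the integral sign using $w\geqslant 0$, $|t|\leqslant 1$ and the normalization for the bounds. You also read off the values at $x=0$ from the parity of the integrands.

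Your caveat is accurate and worth keeping. The paper's proof simply invokes evenness, but $w^{p,q}_{\rm Las}$ is even only when $p=q$. For $p\neq q$, not only the reality claim but also $D\varphi(0)=0$ fails, since $\int_{-1}^1 t\,w(t)\,dt\neq 0$. The bounds, $\varphi(0)=1$ and $D^2\varphi(0)<0$ survive, as you say.
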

\begin{proof}
	By definition,
	\[
	\varphi(x) = \varphi_w(x) = \langle e^{ixt}, 1 \rangle_1^w = \frac{1}{2}\int_{-1}^1 w(t)e^{ixt}dt.
	\]
	Since $w$ is an even function, $\varphi(x)$ is real-valued, and thus $D^n\varphi(x) \in \mathbb{R}$ for all $n \in \mathbb{N}$. Let $ \mathcal{A}:= \frac{1}{2}\int_{-1}^1 w(t)dt = 1$ for brevity. Then direct integration yields
	\[
	|\varphi(x)| \leqslant \mathcal{A}, \quad  \varphi(0) = \mathcal{A}.
	\]
	Differentiating under the integral sign gives the bounds
	\[
	|D\varphi(x)| \leqslant \frac{1}{2}\int_{-1}^1 w(t)|t|dt \leqslant \mathcal{A}, \quad |D^2\varphi(x)| \leqslant \frac{1}{2}\int_{-1}^1 w(t)t^2dt \leqslant \mathcal{A}.
	\]
	For the derivatives at the origin, the parity of the integrands implies
	\[
	D\varphi(0) = \frac{i}{2}\int_{-1}^1 w(t)tdt = 0, \quad D^2\varphi(0) = - \frac{1}{2}\int_{-1}^1 w(t)t^2dt < 0,
	\]
	which completes the proof.
\end{proof}

Let $f$ be the analytic quasi-periodic function defined in \eqref{QP} with the Diophantine frequency $\nu$ satisfying \eqref{DIO}. For a given $T > 0$, we define $\Xi := \mathbb{Z}^d \setminus \{(1,0,\dots,0)\}$ and set $\Omega_k := \langle k, \nu \rangle - \nu_1$ for all $k \in \Xi$. We then define the function
\[
\psi(x) := \varphi(x) + \sum_{k \in \Xi} a_k \varphi(x + \Omega_k T).
\]
Recall that from frequency map analysis, we have $\nu_1^T = \arg\max \phi(\sigma)$ near $\nu_1$. By means of the substitution $x = (\nu_1 - \sigma)T$, we obtain
\begin{align*}
	\phi(\sigma) &= \left| \langle f(t), e^{i\sigma t} \rangle_T^w \right| \\
	&= \left| \frac{1}{2T} \int_{-T}^T w(t/T) f(t) e^{-i\sigma t}  dt \right| \\
	&= \left| \frac{1}{2T} \int_{-T}^T w(t/T) \left( e^{i\nu_1 t} + \sum_{k \in \Xi} a_k e^{i\langle k,\nu \rangle t} \right) e^{-i\sigma t}  dt \right| \\
	&= \left| \frac{1}{2} \int_{-1}^1 w(s) e^{ixs}  ds + \sum_{k \in \Xi} a_k \left( \frac{1}{2} \int_{-1}^1 w(s) e^{i(x + \Omega_k T)s}  ds \right) \right| \\
	&= |\psi(x)|.
\end{align*}
This leads us to investigate the maximum modulus of $\psi(x)$, or equivalently, the maximum of its square $\psi(x)\overline{\psi}(x)$. Consequently, for the point $x$ at which this maximum is attained, one necessarily has
\[
D\left( \psi(x)\overline{\psi}(x) \right) = D\psi(x) \overline{\psi}(x) + \psi(x) D\overline{\psi}(x) = 0.
\]
Thus, by Lemma \ref{lemma:1}, a straightforward computation yields
\begin{align}
	F(x,T) :&= \frac{1}{2}\left( D\psi(x)\overline{\psi}(x) + \psi(x)D\overline{\psi}(x) \right)\notag \\
	&= \frac{1}{2} \left( D\varphi(x) + \sum_{k \in \Xi} a_k D\varphi(x + \Omega_k T) \right) \left( \varphi(x) + \sum_{k \in \Xi} \overline{a}_k \varphi(x + \Omega_k T) \right)\notag \\
	&\quad + \frac{1}{2} \left( \varphi(x) + \sum_{k \in \Xi} a_k \varphi(x + \Omega_k T) \right) \left( D\varphi(x) + \sum_{k \in \Xi} \overline{a}_k D\varphi(x + \Omega_k T) \right)\notag \\
	&= D\varphi(x)\varphi(x) + \sum_{k \in \Xi} \Re(a_k) \left( \varphi(x) D\varphi(x + \Omega_k T) + D\varphi(x) \varphi(x + \Omega_k T) \right) \notag \\
	&\quad + \sum_{k,l \in \Xi} \Re(a_k \overline{a}_l) \varphi(x + \Omega_k T) D\varphi(x + \Omega_l T) = 0.\label{def:F}
\end{align}

The following lemma establishes the solvability of $x=x(T)$ in terms of $T$.

\begin{lemma}\label{lemma:2}
	There exists a neighborhood $\mathscr{U}$ of $+\infty$ and a unique continuous map $x=x(T)$ such that
	\[
	\lim_{T \to +\infty} x(T) = 0 \quad \text{and} \quad F(x(T),T) = 0, \quad \forall T \in \mathscr{U}.
	\]
\end{lemma}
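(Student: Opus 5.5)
The plan is an implicit function theorem argument with the variable $\varepsilon := 1/T$ (so $T\to+\infty$ corresponds to $\varepsilon\to 0^+$). Write
\[
F(x,T) = D\varphi(x)\varphi(x) + R(x,T),
\]
where $R$ collects the two sums in \eqref{def:F}. I will show that $R(x,T)\to 0$ and $\partial_x R(x,T)\to 0$ as $T\to+\infty$, uniformly for $|x|\leqslant \delta$ with $\delta>0$ small. At $x=0$, Lemma \ref{lemma:1} gives $D\varphi(0)\varphi(0)=0$ and
\[
\partial_x\big(D\varphi\,\varphi\big)(0)=D^2\varphi(0)\varphi(0)+(D\varphi(0))^2=D^2\varphi(0)<0 .
\]
So the principal part has a nondegenerate simple zero at the origin. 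A quantitative perturbation argument then gives existence, uniqueness and continuity of $x(T)$.

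\emph{Decay of the remainder.} Since $w=w_{\rm Las}^{p,q}$ is $C^\infty$ on $\mathbb{R}$ (after extension by zero) with all derivatives vanishing at $\pm1$, repeated integration by parts gives
\[
|D^j\varphi(y)|\leqslant C_{j,m}(1+|y|)^{-m}\quad\text{for all } j,m\in\mathbb{N}.
\]
For $|x|\leqslant\delta\leqslant1$ this yields
\[
|D^j\varphi(x+\Omega_kT)|\leqslant C(1+|\Omega_k|T)^{-m}\leqslant C'\,\min\{1,\,|\Omega_k|^{-m}T^{-m}\}.
\]
Note that $\Omega_k=\langle k-(1,0,\dots,0),\nu\rangle\neq0$ for $k\in\Xi$, and \eqref{DIO} gives $|\Omega_k|^{-1}\lesssim (1+\|k\|_{\ell^1})^{\tau}$. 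Combining this with analyticity, $|a_k|\leqslant C_f e^{-r\|k\|_{\ell^1}}$, the single sums are bounded by
\[
T^{-m}\sum_k |a_k||\Omega_k|^{-m}=\mathcal O(T^{-m}).
\]
The double sums are bounded by $\big(\sum_k|a_k|\min\{1,|\Omega_k|^{-m}T^{-m}\}\big)^2$, which is also $\mathcal O(T^{-m})$. Hence $\sup_{|x|\leqslant\delta}\big(|R|+|\partial_xR|\big)\to0$. Continuity of $R$ in $(x,T)$ follows from the same uniform bounds via the Weierstrass M-test. The summability used here is exactly the convergence of $\sum|a_k\Omega_k^{-m}|$ noted in the remark after Theorem \ref{TH1}.

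\emph{Existence and uniqueness.} Choose $\delta$ so that $\partial_x(D\varphi\,\varphi)\leqslant D^2\varphi(0)/2<0$ on $[-\delta,\delta]$. Then take $T_0$ so large that for $T\geqslant T_0$:
\begin{itemize}
\item $|\partial_xR|\leqslant |D^2\varphi(0)|/4$ on that interval;
\item $|R|$ is smaller than $\min\big\{|D\varphi(\pm\delta)\varphi(\pm\delta)|\big\}$. These endpoint values are nonzero and of opposite signs, by strict monotonicity of the principal part.
\end{itemize}
Then $x\mapsto F(x,T)$ is strictly decreasing on $[-\delta,\delta]$ and changes sign there. So it has exactly one zero $x(T)$, and we set $\mathscr U=[T_0,+\infty)$. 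Continuity of $T\mapsto x(T)$ follows from uniqueness together with the joint continuity of $F$ (or from the implicit function theorem applied at each point, since $\partial_xF\neq0$). Finally, since $|R|\to0$ while the principal part is bounded below by $(|D^2\varphi(0)|/2)|x|$ away from 0, we get $|x(T)|\lesssim\sup|R|\to0$. Applying this argument with $\delta$ replaced by any smaller $\delta'$ gives $\lim x(T)=0$ as well.

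\emph{Main obstacle.} The step needing care is the uniform decay of $\varphi$ and its derivatives at the shifted points $x+\Omega_kT$. This requires the smoothness and flatness of $w$ at $\pm1$ (so that $\varphi$ decays faster than any power), together with handling small $|\Omega_k|$ via the Diophantine bound so that the series converge uniformly. Everything else is a one-dimensional monotonicity argument.
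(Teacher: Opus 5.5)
Your proof is correct, but it follows a different route from the paper's at both main steps.

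\textbf{Decay of the remainder.} The paper uses only $\sum_k|a_k|<\infty$, $\Omega_k\neq 0$ and the Riemann--Lebesgue lemma, and passes to the limit by dominated convergence. You instead prove a quantitative bound $\mathcal{O}(T^{-m})$, using integration by parts together with the Diophantine estimate $|\Omega_k|^{-1}\lesssim(1+\|k\|_{\ell^1})^{\tau}$.

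\textbf{Existence and uniqueness.} The paper extends $F$ and $\partial_x F$ continuously to $[-A,A]\times(0,+\infty]$. It then invokes an implicit function theorem, in a version allowing a merely topological parameter, at the point $(0,+\infty)$. You replace this with a direct monotonicity and intermediate value argument on $[-\delta,\delta]$.

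\textbf{What your route buys.} It is self-contained and avoids compactifying the parameter. It makes the uniformity in $x$ explicit. It also yields the inequality $|x(T)|\leqslant 2|D^2\varphi(0)|^{-1}|R(x(T),T)|$, which makes the later passage from \eqref{gjgj} to \eqref{xT} immediate.

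\textbf{What the paper's route buys.} It needs fewer hypotheses, so Lemma~\ref{lemma:2} can be reused verbatim in Sections~\ref{SUBSEC22}--\ref{SUBSEC24}. Your rate requires $\sum_k|a_k||\Omega_k|^{-m}<\infty$, which is not automatic in the almost periodic settings.

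Your closing remark overstates the role of the Diophantine bound. It is not needed for the series to converge uniformly, since each term is dominated by $|a_k|$. Nor is it needed for $\sup_{|x|\leqslant\delta}\big(|R|+|\partial_x R|\big)\to 0$, because $D^j\varphi(y)\to 0$ as $|y|\to\infty$ and dominated convergence then suffices. It only buys the polynomial rate, which the lemma does not require. Substituting the paper's Riemann--Lebesgue step into your monotonicity argument therefore gives the most economical proof.
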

\begin{proof}
	Let $A>0$ be given, and consider $x \in [-A,A]$. We first assert that 	
	\[
	\lim_{T \to +\infty} F(x,T) = \varphi(x)D\varphi(x)
	\]
	holds uniformly in $x$, or equivalently, that the remaining terms in \eqref{def:F} vanish as $T \to +\infty$. Once this is established, $F(x, T)$ can be continuously extended to $[-A,A] \times (0,+\infty]$. Note that by the analyticity of $f$, we have
	\[
	\sum_{k \in \mathbb{Z}^n} |\Re(a_k)| \leqslant \sum_{k \in \mathbb{Z}^n} |a_k| < +\infty, \quad \sum_{k,l \in \mathbb{Z}^n} |\Re(a_k \overline{a}_l)| \leqslant \sum_{k,l \in \mathbb{Z}^n} |a_k| |a_l| = \left( \sum_{k \in \mathbb{Z}^n} |a_k| \right)^2 < +\infty,
	\]
	while the Riemann--Lebesgue lemma implies that for any fixed $x \in [-A,A]$, $j \in \mathbb{N}$, and $k \in \Xi$,
	\[
	\lim_{T \to +\infty} D^j\varphi(x + \Omega_k T) = 0.
	\]
	Therefore, the assertion follows from the Lebesgue dominated convergence theorem.\footnote{A delicate estimate via Taylor expansion as in \cite{Las99} is not required here, though it is certainly feasible.}
	
	Furthermore, by \eqref{def:F}, we derive
	\begin{align}
		\frac{\partial F}{\partial x}(x,T) &= D^2\varphi(x) \varphi(x) + (D\varphi(x))^2 \notag \\
		&\quad + \sum_{k \in \Xi} \Re(a_k) \left( 2D\varphi(x)D\varphi(x + \Omega_k T) + \varphi(x)D^2\varphi(x + \Omega_k T) + D^2\varphi(x)\varphi(x + \Omega_k T) \right) \notag \\
		&\quad + \sum_{k,l \in \Xi} \Re(a_k \overline{a}_l) \left( D\varphi(x + \Omega_k T)D\varphi(x + \Omega_l T) + \varphi(x + \Omega_k T)D^2\varphi(x + \Omega_l T) \right). \notag 
	\end{align}
	Following a similar argument as above, we deduce that for any fixed $x$,
	\begin{equation}\label{def:Fdao}
		\lim_{T \to +\infty} \frac{\partial F}{\partial x}(x,T) = D^2\varphi(x) \varphi(x) + (D\varphi(x))^2.
	\end{equation}
	Hence, the continuous extension, still denoted by $F(x,T)$, admits a continuous first derivative $\frac{\partial F}{\partial x}(x,T)$ on $[-A,A] \times (0,+\infty]$. 
	
	Now, by \eqref{def:Fdao} and Lemma \ref{lemma:1}, we obtain    
	\[
	\frac{\partial F}{\partial x}(0, +\infty) = D^2\varphi(0)\varphi(0) + (D\varphi(0))^2 = D^2\varphi(0) < 0.
	\]
	Thus, by applying the implicit function theorem \cite[p.~176]{Sch92} to $F(x,T)$ at the point $(0,+\infty)$, we conclude the proof.
\end{proof}

Given the continuous map $x(T)$ obtained in Lemma \ref{lemma:2}, we now investigate the asymptotic behavior of the series $\sum_{k \in \Xi} a_k D^j \varphi(x(T) + \Omega_k T)$ as $T \to +\infty$ for $0\leqslant j\leqslant  1$. We shall demonstrate that these series decay exponentially. To this end,  we define $\zeta := (\tau + \beta)^{-1}$ with $\beta := 1 + (\min\{p,q\})^{-1}$ (recalling that $p,q>0$ are the constants in the weighting function $w$), and decompose the index set $\Xi$ into two disjoint parts for sufficiently large $T$:
\[
\mathcal{S}_{\mathrm{T}} := \left\{ k \in \Xi : \|k\|_{\ell^1} \leqslant T^\zeta \right\}, \quad \mathcal{S}_{\mathrm{R}} := \left\{ k \in \Xi : \|k\|_{\ell^1} > T^\zeta \right\},
\]
where $\|k\|_{\ell^1} := \sum_{i=1}^d |k_i|$. Consequently, these series can be split as follows:
\begin{equation}\label{fenjie}
	\sum_{k \in \Xi} a_k D^j\varphi(x(T) + \Omega_k T) = \sum_{k \in \mathcal{S}_{\mathrm{T}}} a_k D^j \varphi(x(T) + \Omega_k T) + \sum_{k \in \mathcal{S}_{\mathrm{R}}} a_k D^j \varphi(x(T) + \Omega_k T), \quad 0\leqslant j\leqslant  1.
\end{equation}
As will be seen below, the truncated principal part and the truncated remainder require fundamentally different treatments. The former presents the primary difficulty, as it necessitates overcoming the small divisor problem via the Diophantine nonresonance condition. The latter, by contrast, relies essentially on regularity and is comparatively straightforward. We now proceed to estimate these two terms using distinct methods.

\begin{lemma}\label{lemma:3}
	For $\zeta$ as defined above, there exists a constant $c_1 > 0$ such that the following asymptotic estimate holds as $T \to +\infty$:
	\[\sum_{k \in \mathcal{S}_{\mathrm{T}}} a_k D^j\varphi(x(T) + \Omega_k T) = \mathcal{O}\big(\exp(-c_1 T^\zeta)\big),\quad 0 \leqslant j \leqslant 1.\]
\end{lemma}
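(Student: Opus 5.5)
The plan is to combine a decay estimate for the transform $\varphi$ and its derivatives with the Diophantine lower bound on $|\Omega_k|$ for $k\in\mathcal S_{\mathrm T}$.

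\textbf{Step 1: Gevrey-type decay of $\varphi$.} We first establish that there exist constants $C,c>0$ such that
\[
|D^j\varphi(y)| \leqslant C\exp\big(-c|y|^{1/\beta}\big), \qquad j=0,1,\quad y\in\mathbb R,
\]
with $\beta=1+(\min\{p,q\})^{-1}$. The weighting function $w=w^{p,q}_{\rm Las}$ belongs to the Gevrey class $G^\beta$ on $[-1,1]$ and vanishes to infinite order at $\pm1$. Concretely, one has derivative bounds
\[
\|D^m w\|_{L^1}\leqslant C_0 R^m (m!)^{\beta},
\]
which follow from the Cauchy estimates on the near-boundary exponential $\exp(-(1\pm x)^{-p})$; this is the standard estimate used in \cite{TL24a,TL25b}. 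Since all boundary terms vanish, integrating by parts $m$ times gives
\[
|D^j\varphi(y)|\leqslant C_0R^m(m!)^\beta|y|^{-m}.
\]
Optimizing over $m$, taking $m\sim (|y|/R)^{1/\beta}$, yields the claimed bound. The same argument applies to $t\,w(t)$, which handles $j=1$.

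\textbf{Step 2: Lower bound on $|x(T)+\Omega_kT|$ for $k\in\mathcal S_{\mathrm T}$.} Note that $\Omega_k=\langle k-e_1,\nu\rangle$ with $k-e_1\neq 0$, and $\|k-e_1\|_{\ell^1}\leqslant \|k\|_{\ell^1}+1\leqslant 2T^\zeta$. The Diophantine condition \eqref{DIO} then gives
\[
|\Omega_k|>\kappa_\epsilon 2^{-\tau}T^{-\tau\zeta}.
\]
By Lemma \ref{lemma:2}, $x(T)\to0$, so $|x(T)|\leqslant 1$ for large $T$. Hence
\[
|x(T)+\Omega_kT|\geqslant \kappa_\epsilon 2^{-\tau}T^{1-\tau\zeta}-1\geqslant c'T^{1-\tau\zeta},
\]
where $1-\tau\zeta=\beta\zeta>0$, since $\zeta=(\tau+\beta)^{-1}$.

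\textbf{Step 3: Conclusion.} Inserting Step 2 into Step 1 gives, uniformly for $k\in\mathcal S_{\mathrm T}$,
\[
|D^j\varphi(x(T)+\Omega_kT)|\leqslant C\exp\big(-c(c'T^{\beta\zeta})^{1/\beta}\big)=C\exp(-c''T^{\zeta}).
\]
Summing over $k$ with $\sum|a_k|<\infty$ (by analyticity) yields the lemma with $c_1=c''$.

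\textbf{Main obstacle.} The main obstacle is Step 1, specifically the sharp Gevrey index $\beta=1+1/\min\{p,q\}$ of $w$. One must verify the factorial bound $(m!)^\beta$ for derivatives of $\exp(-(1+x)^{-p}(1-x)^{-q})$ near the endpoints. I would obtain this by the Cauchy formula on discs of radius proportional to the distance $\delta$ to the endpoint. This gives a bound of the form
\[
m!\,\delta^{-m}\exp(-c\delta^{-p}),
\]
and optimizing over $\delta$ produces the power $(m!)^{1+1/p}$. Alternatively, I would simply cite the corresponding Fourier decay estimate from \cite{TL25b,TL26a}.
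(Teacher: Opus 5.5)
Your proposal is correct and follows the paper's argument in substance. You use the same ingredients: the Diophantine lower bound $|x(T)+\Omega_kT|\gtrsim T^{\beta\zeta}$ on $\mathcal{S}_{\mathrm T}$, repeated integration by parts with the Gevrey-type bound $\|D^l w\|_{L^1(-1,1)}\leqslant\lambda^l l^{\beta l}$ (cited there from \cite{TL25b}), an optimized number of integrations, and summation against $\sum|a_k|<\infty$. The only difference is packaging: you first isolate the $T$-independent decay $|D^j\varphi(y)|\lesssim e^{-c|y|^{1/\beta}}$ and then substitute, whereas the paper optimizes $\iota^*=\lfloor c_3T^\zeta\rfloor$ directly in $T$; your bound $\|k\|_{\ell^1}+1\leqslant 2T^\zeta$ also covers $k=\mathbf{0}$ without a separate case.
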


\begin{proof}
	Recall that the frequency vector $\nu$ satisfies the Diophantine condition
	\[|\langle k, \nu \rangle| > \frac{\kappa_\epsilon}{\|k\|_{\ell^1}^\tau}, \quad \forall k \in \mathbb{Z}^d \setminus \{\mathbf{0}\}, \]
	where $\kappa_\epsilon > 0$ and $\tau \geqslant d - 1$. We first verify that $\Omega_k = \langle k, \nu \rangle - \nu_1$ inherits a similar Diophantine property for all $k \in \Xi$. Letting $k^* := k - (1,0,\ldots,0) \in \mathbb{Z}^d \setminus \{\mathbf{0}\}$, we have
	\[\|k^*\|_{\ell^1} \leqslant \|k\|_{\ell^1} + \|(1,0,\ldots,0)\|_{\ell^1} = \|k\|_{\ell^1} + 1.\]
	It then follows that
	\begin{equation}\label{eq1}
		|\Omega_k| = |\langle k^*, \nu \rangle| \geqslant \frac{\kappa_\epsilon}{\|k^*\|_{\ell^1}^\tau} \geqslant \frac{\kappa_\epsilon}{(\|k\|_{\ell^1} + 1)^\tau}, \quad \forall k \in \Xi,
	\end{equation}
	as desired. For sufficiently large $T$, Lemma \ref{lemma:2} implies $|x(T)| \leqslant 1$. Consequently, noting that $1-\zeta\tau = \beta(\tau + \beta)^{-1}$, we deduce that for all $k \in \mathcal{S}_{\mathrm{T}} \setminus \{\mathbf{0}\}$,
	\begin{equation}\label{dio2}
		|x(T)+\Omega_k T| \geqslant |\Omega_k T| - |x(T)| \geqslant \frac{\kappa_\epsilon T}{(\|k\|_{\ell^1} + 1)^\tau} - 1 \geqslant \frac{\kappa_\epsilon}{2^{\tau + 1}} \frac{T}{T^{\zeta\tau}} = \frac{\kappa_\epsilon}{2^{\tau + 1}} T^{\beta(\tau + \beta)^{-1}}.
	\end{equation}
	For $k=\mathbf{0} \in \mathcal{S}_{\mathrm{T}}$, we similarly have, for sufficiently large $T$,
	\begin{equation}\label{dio3}
		|x(T) + \Omega_{\mathbf{0}} T| \geqslant |\Omega_{\mathbf{0}} T| - |x(T)| \geqslant |\Omega_{\mathbf{0}}|T - 1 \geqslant \frac{\kappa_\epsilon}{2^{\tau + 1}}T^{\beta(\tau + \beta)^{-1}}.
	\end{equation}
	These estimates allow us to circumvent the potential transition from a nonresonant to a resonant regime.
	
	From the definition of $\varphi$ and the fact that the weighting function $w \in C_0^\infty([-1,1])$, integrating by parts $\iota$ times yields
	\begin{align*}
		D^j\varphi (x(T) + \Omega_k T) &= \frac{i^j}{2}\int_{-1}^1 w(t)t^j \exp(i(x(T) + \Omega_k T)t)dt \\
		&= \frac{i^j(-1)^\iota}{2(i(x(T) + \Omega_k T))^\iota}\int_{-1}^1 D^\iota(w(t)t^j)\exp(i(x(T) + \Omega_k T)t)dt, \quad 0 \leqslant j \leqslant 1.
	\end{align*}	
	Hence, utilizing \eqref{dio2} and \eqref{dio3}, we find that for any $k \in \mathcal{S}_{\mathrm{T}}$ and any $\iota \in \mathbb{N}^+$,
	\begin{align}
		|D^j\varphi (x(T) + \Omega_k T)| &\leqslant \frac{1}{2|x(T) + \Omega_k T|^\iota}\int_{-1}^1 |D^\iota(w(t)t^j)||\exp(i(x(T) + \Omega_k T)t)|dt \notag \\
		&\leqslant \frac{1}{2}\left(\frac{2^{\tau + 1}}{\kappa_\epsilon}\right)^\iota\frac{1}{T^{\beta(\tau + \beta)^{-1}\iota}}\|D^\iota(w(t)t^j)\|_{L^1(-1,1)},\quad T \gg 1, \quad 0 \leqslant j \leqslant 1.\label{Djfuhe}
	\end{align}
	To further estimate \eqref{Djfuhe}, we require a key lemma that establishes the asymptotic behavior of the higher-order derivatives of the weighting function $w$. Recall \cite[Lemma 4.1]{TL25b}, which demonstrates the existence of a constant $\lambda = \lambda_{p,q} > 1$ such that for $\beta$ as defined above,\footnote{Although the weighting functions differ slightly in form, the analysis remains parallel.}
	\begin{equation}\label{gaodao}
		\|D^l w(t)\|_{L^1(-1,1)} \leqslant \lambda^l l^{\beta l}, \quad \forall l \in \mathbb{N}^+.
	\end{equation}
	By the general Leibniz rule, we observe that
	\[D^\iota(w(t)t^j) = \sum_{u = 0}^\iota \binom{\iota}{u}D^{\iota - u}w(t) D^u t^j = \sum_{u = 0}^j \binom{\iota}{u}D^{\iota - u}w(t) D^u t^j,\quad 0 \leqslant j \leqslant 1.\]
	Note that for $t \in [-1,1]$ and $0 \leqslant j \leqslant 1$, we have the bound $\|D^u t^j\|_{L^\infty(-1,1)} \leqslant 1$. Then, from \eqref{gaodao} and $\beta > 1$, we deduce that\footnote{The quantitative estimates obtained here clearly strengthen the qualitative results of \cite{TL25a} in the Ces\`aro sense.}
	\begin{align}
		\|D^\iota(w(t)t^j)\|_{L^1(-1,1)} &\leqslant \sum_{u = 0}^j \binom{\iota}{u}\|D^{\iota - u}w(t)\|_{L^1(-1,1)} \|D^u t^j\|_{L^\infty(-1,1)}\notag \\
		&\leqslant  \sum_{u = 0}^1 \binom{\iota}{u}\lambda^{\iota - u}(\iota - u)^{\beta (\iota - u)}\notag \\
		&\leqslant   \sum_{u = 0}^1 \iota^u \lambda^\iota \iota^{\beta (\iota - u)} \notag \\
		&\leqslant 2 \lambda^\iota \iota^{\beta\iota},\quad 0 \leqslant j \leqslant 1. \label{hhgd}
	\end{align}
	Substituting \eqref{hhgd} into \eqref{Djfuhe}, we obtain that for any $k \in \mathcal{S}_{\mathrm{T}}$, sufficiently large $T$ (independent of $k$), and any $\iota \in \mathbb{N}^+$ (possibly depending on $T$), there exists a constant $c_2 := 2^{\tau + 1}\kappa_\epsilon^{-1}\lambda$ such that
	\begin{equation}\label{zhongjgj}
		|D^j\varphi (x(T) + \Omega_k T)| \leqslant \left(\frac{2^{\tau + 1}}{\kappa_\epsilon}\right)^\iota\frac{\lambda^\iota \iota^{\beta \iota}}{T^{\beta(\tau + \beta)^{-1}\iota}} \leqslant \frac{c_2^\iota \iota^{\beta \iota}}{T^{\beta(\tau + \beta)^{-1}\iota}},\quad 0 \leqslant j \leqslant 1.
	\end{equation}
	
	To derive the exponential decay estimate from \eqref{zhongjgj}, we optimize the derivative order $\iota$ with respect to $T$ (denoted as $\iota^*$). For sufficiently large $T$, we choose the integer $\iota^*$ as
	\[ \iota^* := \lfloor c_3 T^\zeta \rfloor,\]
	where $\lfloor \cdot \rfloor$ denotes the floor function, and the constant $c_3 > 0$ is defined as $c_3 := e^{-1} c_2^{-1/\beta}$. Since $T$ is sufficiently large, we have $\iota^* \in \mathbb{N}^+$. By the definition of the floor function, we observe that
	\[c_3 T^\zeta - 1 < \iota^* \leqslant c_3 T^\zeta.\]
	Consequently, we can bound the term $(\iota^*)^{\beta \iota^*}$ as follows:
	\[(\iota^*)^{\beta \iota^*} \leqslant (c_3 T^\zeta)^{\beta \iota^*} = c_3^{\beta \iota^*} T^{\beta \zeta \iota^*}.\]
	Substituting this inequality into \eqref{zhongjgj} and noting that $\beta \zeta = \beta(\tau + \beta)^{-1}$, the algebraic powers of $T$ cancel out exactly:
	\[|D^j\varphi (x(T) + \Omega_k T)| \leqslant \frac{c_2^{\iota^*} (\iota^*)^{\beta \iota^*}}{T^{\beta (\tau + \beta)^{-1} \iota^*}} 
	\leqslant \frac{c_2^{\iota^*} c_3^{\beta \iota^*} T^{\beta \zeta \iota^*}}{T^{\beta \zeta \iota^*}} 
	= (c_2 c_3^\beta)^{\iota^*},\quad 0 \leqslant j \leqslant 1.\]
	With our choice of $c_3 = e^{-1} c_2^{-1/\beta}$, it follows that $c_2 c_3^\beta=e^{-\beta}$.
	Thus, the upper bound simplifies to
	\[|D^j\varphi (x(T) + \Omega_k T)| \leqslant \exp(-\beta \iota^*),\quad 0 \leqslant j \leqslant 1.\]
	Utilizing the lower bound $\iota^* > c_3 T^\zeta - 1$, we further obtain
	\[\exp(-\beta \iota^*) < \exp(-\beta (c_3 T^\zeta - 1)) = e^\beta \exp(-\beta c_3 T^\zeta).\]
	We then arrive at the following exponential decay estimate for $ T \gg 1 $:
	\begin{equation}\label{finaldecay}
		|D^j\varphi (x(T) + \Omega_k T)| \leqslant e^\beta \exp(-\beta c_3 T^\zeta),\quad \forall k \in \mathcal{S}_{\mathrm{T}}, \quad 0 \leqslant j \leqslant 1.
	\end{equation}
	
	Finally, utilizing the exponential decay \eqref{finaldecay} and the analyticity of $f$ (which guarantees the absolute convergence of its Fourier series, i.e., $\sum_{k \in \mathbb{Z}^d} |a_k| < \infty$), we obtain as $T \to +\infty$:
	\begin{align*}
		\left| \sum_{k \in \mathcal{S}_{\mathrm{T}}} a_k D^j\varphi (x(T) + \Omega_k T) \right| &\leqslant \left( \sum_{k \in \mathcal{S}_{\mathrm{T}}} |a_k| \right)\left( \max_{k \in \mathcal{S}_{\mathrm{T}}} | D^j\varphi (x(T) + \Omega_k T) | \right) \\
		&\leqslant \left( \sum_{k \in \mathbb{Z}^d} |a_k| \right) e^\beta \exp(-\beta c_3 T^\zeta) \\
		&\lesssim \exp(-\beta c_3 T^\zeta),\quad 0 \leqslant j \leqslant 1.
	\end{align*}
	Setting $c_1 := \beta c_3 > 0$ yields the desired estimate, which completes the proof of Lemma \ref{lemma:3}.
\end{proof}

\begin{lemma}\label{lemma:4}
	There exists a constant $c_4 > 0$ such that the following asymptotic estimate holds as $T \to +\infty$:
	\[\sum_{k \in \mathcal{S}_{\mathrm{R}}} a_k D^j\varphi(x(T) + \Omega_k T) = \mathcal{O}\big(\exp(-c_4 T^\zeta)\big), \quad 0 \leqslant j \leqslant 1.\]
\end{lemma}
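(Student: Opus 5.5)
The remainder sum is handled using only the analyticity of $f$; no small-divisor information is needed. By Lemma \ref{lemma:1} we have $|D^j\varphi(x)|\leqslant 1$ for all real $x$ and $0\leqslant j\leqslant 1$. This bound holds uniformly in $k$ and in $x(T)$. Applying it termwise gives
\[
\Big|\sum_{k\in\mathcal{S}_{\mathrm{R}}} a_k D^j\varphi(x(T)+\Omega_k T)\Big| \leqslant \sum_{k\in\mathcal{S}_{\mathrm{R}}}|a_k| \leqslant C_f\sum_{k\in\mathbb{Z}^d,\ \|k\|_{\ell^1}>T^\zeta} e^{-r\|k\|_{\ell^1}} ,
\]
where the last step uses the analyticity bound $|a_k|\leqslant C_f e^{-r\|k\|_{\ell^1}}$. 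This reduces the lemma to a tail estimate for a lattice sum.

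To bound the tail, I group the lattice points by their $\ell^1$-norm. The number of $k\in\mathbb{Z}^d$ with $\|k\|_{\ell^1}=m$ is at most $2^d\binom{m+d-1}{d-1}\leqslant 2^d(m+1)^{d-1}$. Hence the tail is bounded by
\[
C_f 2^d\sum_{m>T^\zeta}(m+1)^{d-1}e^{-rm}.
\]
The polynomial factor is absorbed by splitting $e^{-rm}=e^{-rm/2}e^{-rm/2}$ and using $\sup_{m\geqslant 0}(m+1)^{d-1}e^{-rm/2}<\infty$. What remains is a geometric series $\sum_{m>T^\zeta}e^{-rm/2}\leqslant (1-e^{-r/2})^{-1}e^{-rT^\zeta/2}$. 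Alternatively, one can compare with $\int_{T^\zeta-1}^\infty (x+2)^{d-1}e^{-rx}\,dx$.

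Combining the two steps yields the bound $\lesssim \exp(-rT^\zeta/2)$, so the lemma holds with $c_4:=r/2$. Any $c_4<r$ would also work, at the cost of a larger implicit constant.

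The only step requiring care is the cardinality count of lattice points on $\ell^1$-spheres, and it is routine in finite dimensions. The role of the split at $T^\zeta$ is clear from this argument. The threshold was chosen in Lemma \ref{lemma:3} to balance the small divisors, and here any threshold growing like a power of $T$ is harmless. Since both lemmas produce the same exponent $T^\zeta$, the two estimates in \eqref{fenjie} combine directly.
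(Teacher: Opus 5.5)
Your proof is correct and takes essentially the same route as the paper. Both arguments combine the uniform bound $|D^j\varphi|\leqslant 1$ from Lemma \ref{lemma:1}, the analyticity decay $|a_k|\leqslant C_f e^{-r\|k\|_{\ell^1}}$, and a tail estimate for the lattice sum (the paper compresses your explicit $\ell^1$-sphere count into a comparison with $\int_{T^\zeta}^{+\infty}y^{d-1}e^{-ry}\,dy$), which yields the estimate for any $0<c_4<r$.
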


\begin{proof}
	By analyticity, there exist constants $C_f, r > 0$ such that $|a_k| \leqslant C_f \exp(-r\|k\|_{\ell^1})$ for all $k \in \mathcal{S}_{\mathrm{R}}$. Consequently, utilizing the uniform bounds on $D^j\varphi$ ($0 \leqslant j \leqslant 1$) provided by Lemma \ref{lemma:1}, we obtain that for any $0 < c_4 < r$ and for $T$ sufficiently large,
	\begin{align*}
		\left| \sum_{k \in \mathcal{S}_{\mathrm{R}}} a_k D^j\varphi(x(T) + \Omega_k T) \right| 
		&\leqslant \sum_{k \in \mathcal{S}_{\mathrm{R}}} |a_k| \big| D^j\varphi(x(T) + \Omega_k T) \big| \\
		&\lesssim \sum_{k \in \mathcal{S}_{\mathrm{R}}} \exp(-r\|k\|_{\ell^1}) \\
		&\lesssim \int_{T^\zeta}^{+\infty} y^{d-1} e^{-ry} dy \\
		&\lesssim \exp(-c_4 T^\zeta), \quad 0 \leqslant j \leqslant 1.
	\end{align*}
	This completes the proof of Lemma \ref{lemma:4}.
\end{proof}

Substituting the exponential estimates from Lemmas \ref{lemma:3} and \ref{lemma:4} into \eqref{fenjie}, we obtain the following global exponential estimate as $T \to +\infty$:
\begin{equation}\label{gjgj}
	\sum_{k \in \Xi} a_k D^j\varphi(x(T) + \Omega_k T) = \mathcal{O}\big(\exp(-c_5 T^\zeta)\big), \quad 0 \leqslant j \leqslant 1,
\end{equation}
where the constant $c_5$ satisfies $c_5 = \min\{c_1, c_4\} > 0$.

Recall from Lemma \ref{lemma:1} that as $x \to 0$,
\[\varphi(x) = 1 + o(x) \quad \text{and} \quad D\varphi(x) = D^2\varphi(0)x + o(x).\]
Consequently, evaluating \eqref{def:F} at $x = x(T)$ and combining it with the uniform bounds on $D^j\varphi$ ($0 \leqslant j \leqslant 1$) from Lemma \ref{lemma:2} and the global exponential estimate in \eqref{gjgj}, we deduce that as $T \to +\infty$:
\[\begin{aligned}
	0 &= D\varphi(x(T))\varphi(x(T)) + \sum_{k \in \Xi} \Re(a_k) \big( \varphi(x(T))D\varphi(x(T) + \Omega_k T) + D\varphi(x(T))\varphi(x(T) + \Omega_k T) \big) \\
	&\quad + \sum_{k,l \in \Xi} \Re(a_k \bar{a}_l) \varphi(x(T) + \Omega_k T)D\varphi(x(T) + \Omega_l T) \\
	&= D^2\varphi(0)x(T) + o(x(T)) + \mathcal{O}\big(\exp(-c_5 T^\zeta)\big),
\end{aligned}\]
which yields
\begin{equation}\label{xT}
	x(T) = \mathcal{O}\big(\exp(-c_5 T^\zeta)\big),\quad T \to +\infty.
\end{equation}
Since $x(T) = (\nu_1 - \sigma)T$ with $\sigma = \nu_1^T$, we ultimately conclude that
\begin{equation}\label{bjgj}
	\nu_1^T - \nu_1 = \mathcal{O}\big(T^{-1}\exp(-c_5 T^\zeta)\big) = \mathcal{O}\big(\exp(-c_{\mathrm{I}} T^\zeta)\big),\quad T \to +\infty,
\end{equation}
where $c_{\mathrm{I}}:=c_5>0$.  This completes the proof of Theorem \ref{TH2}.

\subsection{Proof of Theorem \ref{THBRUNO}: Towards Brjuno nonresonance}\label{SUBSEC22}

Under the Brjuno nonresonance condition \eqref{BRU}--\eqref{BRU2}, namely
\[
|\langle k,\nu \rangle| > \frac{\kappa_{\epsilon}}{\Delta(\|k\|_{\ell^1})}, \quad \forall k \in \mathbb{Z}^d \setminus \{\mathbf{0}\},
\]
where $\Delta : [1,+\infty) \to [1,+\infty)$ is a monotonically increasing approximation function fulfilling
\[
\int_1^{+\infty} \frac{\log \Delta(x)}{x^2}dx < +\infty, \quad \frac{\log \Delta(x)}{x} \searrow 0 \text{\;\;as\;\;} x \to +\infty,
\]
the proof of the frequency map analysis differs from that in the Diophantine setting. Nevertheless, recalling the analysis in Section \ref{SEC21}, the essential differences arise only after Lemma \ref{lemma:2}. For convenience, we retain the notation introduced in Section \ref{SEC21}. Consequently, we proceed to investigate the asymptotic behavior of the series $\sum_{k \in \Xi} a_k D^j \varphi(x(T) + \Omega_k T)$ as $T \to +\infty$ for $0 \leqslant j \leqslant 1$, where $\Omega_k := \langle k, \nu \rangle - \nu_1$ for all $k \in \Xi$, and the continuous map $x(T)$ is as established in Lemma \ref{lemma:2}.

Let us first fix an arbitrary $\varepsilon>0$ and recall the constants $\lambda$ and $\beta$ defined in Section \ref{SEC21}, which will remain fixed throughout this paper. We then decompose the index set $\Xi$ into two disjoint parts:
\[
\widehat{\mathcal{S}}_{\mathrm{T}} := \left\{ k \in \Xi : \|k\|_{\ell^1} \leqslant K(T) \right\}, \quad \widehat{\mathcal{S}}_{\mathrm{R}} := \left\{ k \in \Xi : \|k\|_{\ell^1} > K(T) \right\},
\]
where the truncation function $K(T)$ is uniquely determined by the relation
\begin{equation}\label{rela}
	K^\beta(T) \exp\left(\frac{\varepsilon K(T)}{\log K(T)}\right) = T, \quad T \gg 1.
\end{equation}
It is evident that $K(T) \to +\infty$ as $T \to +\infty$, which necessarily implies
\begin{equation}\label{rela2}
	T \leqslant \exp\left(\frac{2\varepsilon K(T)}{\log K(T)}\right) \implies K(T) \geqslant \frac{c_6}{\varepsilon} \log T \log \log T, \quad T \gg 1
\end{equation}
for some constant $c_6>0$. Hereafter, all universal constants are independent of $\varepsilon$ unless explicitly stated otherwise. With this decomposition, we arrive at
\begin{align}
&\;\sum_{k \in \Xi} a_k D^j\varphi(x(T) + \Omega_k T) \notag \\
= &\;\sum_{k \in \widehat{\mathcal{S}}_{\mathrm{T}}} a_k D^j \varphi(x(T) + \Omega_k T) + \sum_{k \in \widehat{\mathcal{S}}_{\mathrm{R}}} a_k D^j \varphi(x(T) + \Omega_k T), \quad 0 \leqslant j \leqslant 1.\label{fenjie3}
\end{align}
As in Section \ref{SEC21}, we estimate the two resulting components using distinct methods.
\begin{lemma}\label{lemma:25}
	There exists a constant $c_7 > 0$ such that the following asymptotic estimate holds as $T \to +\infty$:
	\[
	\sum_{k \in \widehat{\mathcal{S}}_{\mathrm{T}}} a_k D^j \varphi(x(T) + \Omega_k T) = \mathcal{O}\big( \exp(-c_7 \varepsilon^{-1} \log T \log \log T) \big), \quad 0 \leqslant j \leqslant 1.
	\]
\end{lemma}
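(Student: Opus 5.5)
The plan mirrors the proof of Lemma \ref{lemma:3}, with the power-law small-divisor bound replaced by the Brjuno lower bound and the truncation $T^\zeta$ replaced by $K(T)$ from \eqref{rela}.

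\textbf{Step 1: transfer the Brjuno bound to $\Omega_k$.} Set $k^*:=k-(1,0,\dots,0)$. Then $\|k^*\|_{\ell^1}\leqslant \|k\|_{\ell^1}+1$, and $\Delta$ is monotone. For $k\in\widehat{\mathcal S}_{\mathrm T}$ this gives
\[|\Omega_k|\geqslant \frac{\kappa_\epsilon}{\Delta(K(T)+1)}.\]
Since $\log\Delta(x)/x\searrow 0$, for $T\gg1$ we have
\[\log\Delta(K+1)\leqslant \frac{\varepsilon (K+1)}{2\log K}\cdot\frac{2\log K\,\log\Delta(K+1)}{\varepsilon(K+1)}.\]
This does not immediately give what we need, and it is the main obstacle. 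The monotonicity condition alone only yields $\log\Delta(K)=o(K)$, not $\log\Delta(K)\lesssim \varepsilon K/\log K$.

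To close this gap, I will use the integrability condition. For nonincreasing $g(x):=\log\Delta(x)/x$ with $\int_1^\infty g(x)\,x^{-1}\,dx<\infty$, one has
\[g(K)\log K\leqslant\int_1^K \frac{g(x)}{x}\,dx,\qquad g(K)\log K\leqslant 2\int_{\sqrt K}^K \frac{g(x)}{x}\,dx\to0.\]
Hence $\log\Delta(K)=o(K/\log K)$. So for $T\gg1$ (depending on $\varepsilon$),
\[\Delta(K(T)+1)\leqslant \exp\!\left(\frac{\varepsilon K(T)}{2\log K(T)}\right).\]
Combined with \eqref{rela}, $T=K^\beta\exp(\varepsilon K/\log K)$, and with $|x(T)|\leqslant 1$, this gives, uniformly for $k\in\widehat{\mathcal S}_{\mathrm T}$ (including $k=\mathbf 0$, handled as in \eqref{dio3}),
\[|x(T)+\Omega_kT|\geqslant \tfrac{\kappa_\epsilon}{2}\,K^\beta \exp\!\left(\frac{\varepsilon K}{2\log K}\right)\geqslant \tfrac{\kappa_\epsilon}{2}K(T)^\beta .\]

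\textbf{Step 2: integration by parts.} Repeating \eqref{Djfuhe}--\eqref{zhongjgj} with \eqref{gaodao} and \eqref{hhgd}, I get for every $\iota\in\mathbb N^+$
\[|D^j\varphi(x(T)+\Omega_kT)|\leqslant \frac{c_2^\iota\,\iota^{\beta\iota}}{K^{\beta\iota}}\,\exp\!\left(-\frac{\varepsilon K\iota}{2\log K}\right),\]
with $c_2$ depending only on $\kappa_\epsilon$ and $\lambda$.

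\textbf{Step 3: choose the order of integration.} Take $\iota^*:=\lfloor c_3K(T)\rfloor$ with $c_3:=e^{-1}c_2^{-1/\beta}$, exactly as in Lemma \ref{lemma:3}. The factor $c_2^{\iota}\iota^{\beta\iota}K^{-\beta\iota}$ is then bounded by $e^{-\beta\iota^*}\leqslant e^{\beta}e^{-\beta c_3K}$, and I discard the extra exponential factor since it is $\leqslant 1$. By \eqref{rela2}, $K(T)\geqslant c_6\varepsilon^{-1}\log T\log\log T$, so
\[|D^j\varphi(x(T)+\Omega_kT)|\lesssim \exp(-\beta c_3c_6\varepsilon^{-1}\log T\log\log T).\]
Summing against $\sum_k|a_k|<\infty$, as at the end of Lemma \ref{lemma:3}, yields the claim with $c_7:=\beta c_3 c_6$. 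This constant is independent of $\varepsilon$, as required; only the threshold for "$T$ large" depends on $\varepsilon$.

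The key difficulty is Step 1: extracting from \eqref{BRU2} that $\log\Delta(K)$ is small compared with $\varepsilon K/\log K$. This is what makes $|\Omega_kT|\gtrsim K^\beta$ on the whole truncated set. If the integral argument above proves too delicate, I would instead define $K(T)$ through $K^\beta\Delta(K+1)=T$ directly and use the same tail-integral bound to check that \eqref{rela2} still holds.
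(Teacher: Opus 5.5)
Your proposal is correct and follows the paper's proof essentially step for step. The paper also derives $\Delta(x)\le\exp(\varepsilon x/\log x)$ from monotonicity plus the Cauchy tail of $\int\log\Delta(y)\,y^{-2}\,dy$, taken over $[\log x,x]$ rather than your $[\sqrt{x},x]$. It then gets $|x(T)+\Omega_kT|\gtrsim K(T)^\beta$ and optimizes at $\iota^*\asymp T^{1/\beta}\exp(-\varepsilon K/(\beta\log K))=K(T)$ by \eqref{rela}, which is exactly your choice $\lfloor c_3K(T)\rfloor$. The only cosmetic difference is how the shift $K\mapsto K+1$ is absorbed: you use the factor $\tfrac12$ in the exponent, whereas the paper uses $\Delta(x+1)\le c_9\Delta(x)$.
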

\begin{proof}
	Let us begin with a key observation: for any $\varepsilon > 0$, the approximation function $\Delta(x)$ in the Brjuno condition satisfies
	\begin{equation}\label{bgj}
		\Delta(x) \leqslant \exp\left( \frac{\varepsilon x}{\log x} \right),
	\end{equation}
	provided that $x$ is sufficiently large. To establish this, observe that the monotonicity of $\frac{\log \Delta(x)}{x}$ implies
	\[
	\int_{\log x}^x \frac{\log \Delta(y)}{y^2} dy \geqslant \frac{\log \Delta(x)}{x} \int_{\log x}^x \frac{1}{y} dy = \frac{\log \Delta(x)}{x} (\log x - \log \log x) \geqslant \frac{\log \Delta(x) \log x}{2x}, \quad x \gg 1.
	\]
	On the other hand, since $\int_1^{+\infty} \frac{\log \Delta(x)}{x^2} dx < +\infty$, the Cauchy criterion guarantees that
	\[
	\int_{\log x}^x \frac{\log \Delta(y)}{y^2} dy \leqslant \frac{\varepsilon}{2}, \quad x \gg 1.
	\]
	Combining these two estimates immediately yields the desired assertion.
	
	Next, we prove that there exists a constant $c_8 > 0$ such that for sufficiently large $T$,
	\[
	|x(T) + \Omega_k T| \geqslant c_8 T \exp\left( -\frac{\varepsilon K(T)}{\log K(T)} \right), \quad \forall k \in \widehat{\mathcal{S}}_{\mathrm{T}}.
	\]
	Note that there exists a constant $c_9 > 0$ such that $\Delta(x + 1) \leqslant c_9 \Delta(x)$ for all $x \geqslant 1$. Consequently, following the derivation of \eqref{eq1}, we obtain
	\[
	|\Omega_k| \geqslant \frac{\kappa_{\epsilon}}{\Delta(\|k\|_{\ell^1} + 1)} \geqslant \frac{\kappa_{\epsilon}}{c_9 \Delta(\|k\|_{\ell^1})}, \quad \forall k \in \Xi.
	\]
	Utilizing this inequality and noting from \eqref{rela} that $T \exp\left( -\frac{\varepsilon K(T)}{\log K(T)} \right) = K^\beta(T) \to +\infty$ as $T \to +\infty$, we deduce from \eqref{dio2} and \eqref{bgj} that for $T \gg 1$,
	\begin{align*}
		\left| x(T) + \Omega_k T \right| &\geqslant \left| \Omega_k T \right| - |x(T)|\\ &\geqslant \frac{\kappa_{\epsilon} T}{c_9 \Delta(\|k\|_{\ell^1})} - 1 \\
		&\geqslant \frac{\kappa_{\epsilon} T}{c_9 \Delta(K(T))} - 1 \\
		&\geqslant c_8 T \exp\left( -\frac{\varepsilon K(T)}{\log K(T)} \right), \quad \forall k \in \widehat{\mathcal{S}}_{\mathrm{T}} \setminus \{\mathbf{0}\},
	\end{align*}
	and from \eqref{dio3} that
	\[
	|x(T) + \Omega_{\mathbf{0}} T| \geqslant |\Omega_{\mathbf{0}} T| - |x(T)| \geqslant |\Omega_{\mathbf{0}}| T - 1 \geqslant c_8 T \exp\left( -\frac{\varepsilon K(T)}{\log K(T)} \right),
	\]
	provided that the constant $c_8 > 0$ satisfies $0 < c_8 < (2c_9)^{-1} \kappa_{\epsilon}$. This confirms the desired lower bound.
	
	With these bounds established, employing the method used to derive \eqref{Djfuhe} alongside \eqref{hhgd} yields that for any $k \in \widehat{\mathcal{S}}_{\mathrm{T}}$, for all sufficiently large $T$ (independent of $k$), and for any $\iota \in \mathbb{N}^+$ (possibly depending on $T$),
	\begin{align*}
		\left| D^j\varphi (x(T) + \Omega_k T) \right| &\leqslant \frac{1}{2|x(T) + \Omega_k T|^\iota} \int_{-1}^1 \left| D^\iota \big( w(t)t^j \big) \right| \left| \exp(i(x(T) + \Omega_k T)t) \right| dt \\
		&\leqslant c_{10}^\iota T^{-\iota} \exp\left( \frac{\varepsilon K(T)\iota}{\log K(T)} \right) \iota^{\beta \iota}, \quad 0 \leqslant j \leqslant 1,
	\end{align*}
	where $c_{10} := \lambda c_8^{-1} > 0$. Furthermore, we choose the integer
	\[
	\iota^* = \left\lfloor c_{10}^{-\frac{1}{\beta}} e^{-1} T^{\frac{1}{\beta}} \exp\left( -\frac{\varepsilon K(T)}{\beta \log K(T)} \right) \right\rfloor
	\]
	to minimize the upper bound. Following the optimization procedure detailed in Lemma \ref{lemma:3} (specifically for \eqref{zhongjgj}), and appealing to \eqref{rela} and \eqref{rela2}, we ultimately arrive at the following estimate for $T \gg 1$:
	\begin{align*}
		\left| D^j\varphi (x(T) + \Omega_k T) \right| &\leqslant \exp\left( -c_{11} T^{\frac{1}{\beta}} \exp\left( -\frac{\varepsilon K(T)}{\beta \log K(T)} \right) \right)\\
		 &= \exp(-c_{11}K(T)) \\
		 &\leqslant \exp(-c_7 \varepsilon^{-1} \log T \log \log T),\quad  \forall k \in \widehat{\mathcal{S}}_{\mathrm{T}}, \quad 0 \leqslant j \leqslant 1,
	\end{align*}
 where the constant $c_{11}$ satisfies $0 < c_{11} < \beta e^{-1} c_{10}^{-1/\beta}$ and we set $c_7 := c_6 c_{11} > 0$. Finally, summing over $k \in \widehat{\mathcal{S}}_{\mathrm{T}}$ and taking into account the absolute summability bound $\sum_{k \in \widehat{\mathcal{S}}_{\mathrm{T}}} |a_k| \leqslant \sum_{k \in \mathbb{Z}^d} |a_k| < +\infty$, the proof of Lemma \ref{lemma:25} is complete.
\end{proof}

\begin{lemma}\label{lemma:26}
	There exists a constant $c_{12} > 0$ such that the following asymptotic estimate holds as $T \to +\infty$:
	\[
	\sum_{k \in \widehat{\mathcal{S}}_{\mathrm{R}}} a_k D^j \varphi(x(T) + \Omega_k T) = \mathcal{O}\big( \exp(-c_{12} \varepsilon^{-1} \log T \log \log T) \big), \quad 0 \leqslant j \leqslant 1.
	\]
\end{lemma}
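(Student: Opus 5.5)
This is the remainder term, and it can be handled exactly as in Lemma \ref{lemma:4}, with $K(T)$ in place of $T^\zeta$. By Lemma \ref{lemma:1}, $|D^j\varphi|\leqslant 1$ on $\mathbb{R}$ for $0\leqslant j\leqslant 1$. So no small-divisor information is needed, and the whole estimate rests on the analyticity bound $|a_k|\leqslant C_f\exp(-r\|k\|_{\ell^1})$. This gives
\[
\Big|\sum_{k \in \widehat{\mathcal{S}}_{\mathrm{R}}} a_k D^j \varphi(x(T) + \Omega_k T)\Big| \leqslant C_f \sum_{k\in\mathbb{Z}^d,\ \|k\|_{\ell^1}>K(T)} \exp(-r\|k\|_{\ell^1}), \quad 0\leqslant j\leqslant 1 .
\]

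Next, count lattice points by shells. The number of $k\in\mathbb{Z}^d$ with $\|k\|_{\ell^1}=m$ is at most $c_d\, m^{d-1}$ for $m\geqslant 1$. The tail is therefore bounded by
\[
c_d\sum_{m>K(T)} m^{d-1}e^{-rm}\lesssim \int_{K(T)}^{+\infty} y^{d-1}e^{-ry}\,dy\lesssim \exp\big(-\tfrac{r}{2}K(T)\big),
\]
since the polynomial factor is absorbed by halving the exponent once $K(T)$ is large. This is legitimate because $K(T)\to+\infty$ as $T\to+\infty$, which follows from \eqref{rela}.

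Finally, insert the lower bound \eqref{rela2}, $K(T)\geqslant c_6\varepsilon^{-1}\log T\log\log T$ for $T\gg1$. This yields the bound $\exp\big(-\tfrac{r c_6}{2}\varepsilon^{-1}\log T\log\log T\big)$, so we may take $c_{12}:=rc_6/2$. This constant does not depend on $\varepsilon$, as the convention after \eqref{rela2} requires. The only points needing care are checking that \eqref{rela2} really follows from \eqref{rela}, which was asserted earlier and which we take as given, and keeping the implicit constants (depending on $d$, $r$, $C_f$) independent of $\varepsilon$ apart from the threshold on $T$. There is no genuine obstacle beyond this bookkeeping.

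Combining this with Lemma \ref{lemma:25} in \eqref{fenjie3}, and letting $\varepsilon$ be small, is what later makes $c_{\mathrm{II}}$ arbitrarily large.
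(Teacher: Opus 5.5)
Your proposal is correct and follows the paper's proof essentially step for step. The paper also bounds $|D^j\varphi|\leqslant 1$ via Lemma \ref{lemma:1}, uses the analyticity decay to reduce the sum to $\int_{K(T)}^{+\infty} y^{d-1}e^{-ry}\,dy\lesssim \exp(-c_{13}K(T))$ with $0<c_{13}<r$, and then inserts \eqref{rela2} to obtain $c_{12}=c_6c_{13}$; your lattice-shell count and the choice $c_{13}=r/2$ just make these steps explicit.
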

\begin{proof}
	Analogous to the analysis in Lemma \ref{lemma:4}, we immediately deduce from \eqref{rela2} that as $T \to +\infty$,
	\begin{align*}
	\left| \sum_{k \in \widehat{\mathcal{S}}_{\mathrm{R}}} a_k D^j \varphi(x(T) + \Omega_k T) \right| &\lesssim \int_{K(T)}^{+\infty} y^{d-1} e^{-ry} dy \\
	&\lesssim \exp(-c_{13} K(T)) \\
	&\leqslant \exp(-c_{12} \varepsilon^{-1} \log T \log \log T), \quad 0 \leqslant j \leqslant 1, 	
	\end{align*}
	where the constant $c_{13}$ satisfies $0 < c_{13} < r$, and we set $c_{12} := c_6 c_{13} > 0$. This completes the proof of Lemma \ref{lemma:26}.
\end{proof}

Combining Lemmas \ref{lemma:25} and \ref{lemma:26}, we obtain from \eqref{fenjie3} the following exponential estimate as $T \to +\infty$:
\[
\sum_{k \in \Xi} a_k D^j \varphi(x(T) + \Omega_k T) = \mathcal{O}\big( \exp(-c_{14} \varepsilon^{-1} \log T \log \log T) \big), \quad 0 \leqslant j \leqslant 1,
\]
where the constant $c_{14}$ is defined as $c_{14} := \min\{c_7, c_{12}\} > 0$. Consequently, following an analysis analogous to that in \eqref{xT} and \eqref{bjgj}, we finally arrive at
\begin{align*}
\nu_1^T - \nu_1 & = \mathcal{O}\big(T^{-1}x(T)\big)\\
& = \mathcal{O}\big( T^{-1} \exp(-c_{14} \varepsilon^{-1} \log T \log \log T) \big) \\
&= \mathcal{O}\big( \exp(-c_{\mathrm{II}} \log T \log \log T) \big), \quad T \to +\infty,
\end{align*}
where $c_{\mathrm{II}} := c_{14} \varepsilon^{-1} > 0$. Since $\varepsilon > 0$ is arbitrary, this completes the proof of Theorem \ref{THBRUNO}.

\subsection{Proof of Theorem \ref{TH3}: Exponential convergence for frequency map analysis via almost periodicity}\label{SUBSEC23}
The proof of the frequency map analysis in the almost periodic case differs from that in the quasi-periodic setting in several respects. Nevertheless, the overarching methodology remains analogous, as demonstrated in Section \ref{SUBSEC22}. We shall now estimate the asymptotic behavior of the series $\sum_{k \in \Theta} a_k D^j \varphi(x(T) + \Omega_k T)$ as $T \to +\infty$ for $0 \leqslant j \leqslant 1$, where $\Theta := \mathbb{Z}_*^\mathbb{N} \setminus \{(1,0,\dots)\}$ and $\Omega_k := \langle k, \nu \rangle - \nu_1$ for all $k \in \Theta$, utilizing the continuous map $x(T)$ established in Lemma \ref{lemma:2}. In this context, we recall that the infinite-dimensional Diophantine condition in \eqref{DIO2} for the frequency vector $\nu$ reads
\[
|\langle k, \nu \rangle| > \frac{\gamma}{\prod\limits_{j \in \mathbb{N}} (1 + \langle j \rangle^\mu |k_j|^\mu)}, \quad \forall k \in \mathbb{Z}^\mathbb{N} \text{\;\;with\;\;} 0 < \sum\limits_{j \in \mathbb{N}} |k_j| < +\infty,
\]
for some constants $\gamma > 0$ and $\mu > 1$.

Let us decompose the index set $\Theta$ into two disjoint subsets:
\[
\widecheck{\mathcal{S}}_{\mathrm{T}} := \left\{ k \in \Theta :\; |k|_{\eta} \leqslant (\log T)^{\rho} \right\}, \quad \widecheck{\mathcal{S}}_{\mathrm{R}} := \left\{ k \in \Theta :\; |k|_{\eta} > (\log T)^{\rho} \right\},
\]
where $|k|_\eta := \sum_{j\in\mathbb{N}} \langle j \rangle^\eta |k_j|$ for a fixed integer $\eta \geqslant 2$, and the constant $\rho$ is chosen to satisfy $2 \leqslant \rho < 1+\eta$. Consequently, we have
\begin{align}
	&\;	\sum_{k \in \Theta} a_k D^j\varphi(x(T) + \Omega_k T) \notag \\
	= &\;\sum_{k \in \widecheck{\mathcal{S}}_{\mathrm{T}}} a_k D^j \varphi(x(T) + \Omega_k T) + \sum_{k \in \widecheck{\mathcal{S}}_{\mathrm{R}}} a_k D^j \varphi(x(T) + \Omega_k T), \quad 0 \leqslant j \leqslant 1.\label{fenjie2}
\end{align}
As in Section \ref{SEC21}, we continue to estimate the resulting components using distinct approaches, wherein the infinite-dimensional spatial structure will introduce fundamental differences from the finite-dimensional setting.

\begin{lemma}\label{lemma:5}
	There exists a constant $c_{15} > 0$ such that the following asymptotic estimate holds as $T \to +\infty$:
	\[ 
	\sum_{k \in \widecheck{\mathcal{S}}_{\mathrm{T}}} a_k D^j\varphi(x(T) + \Omega_k T) = \mathcal{O}\Big(\exp\big(-c_{15} (\log T)^\rho\big)\Big), \quad 0 \leqslant j \leqslant 1. 
	\]
\end{lemma}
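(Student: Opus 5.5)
The plan follows the scheme of Lemma \ref{lemma:3}: I first get a uniform lower bound for the shifted arguments $|x(T)+\Omega_k T|$ over $k\in\widecheck{\mathcal{S}}_{\mathrm{T}}$, then integrate by parts with an optimized number of derivatives, and finally sum over $k$ using absolute summability of the coefficients. The difference from the finite-dimensional case lies only in the small divisor step, where the infinite-dimensional Diophantine condition \eqref{DIO2} replaces \eqref{DIO}, and the anisotropic weight $|k|_\eta$ has to control the product $\prod_j(1+\langle j\rangle^\mu|k_j|^\mu)$.

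\textbf{Step 1 (Bourgain-type small divisor estimate).} Set $N:=(\log T)^\rho$. I will show that there is $C=C(\eta,\mu)$ such that, for every $k$ with $|k|_\eta\leqslant N+1$,
\[
\sum_{j\in\mathbb{N}}\log\big(1+\langle j\rangle^\mu|k_j|^\mu\big)\leqslant C\,N^{\frac{1}{1+\eta}}\log N .
\]
Fix the threshold $M:=N^{1/(1+\eta)}$ and split the sum at $j=M$.
\begin{itemize}
\item For $j\leqslant M$, each factor satisfies $\langle j\rangle|k_j|\leqslant|k|_\eta\leqslant N+1$, so each term is at most $\mu\log(2N+2)$. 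There are at most $M+1$ such terms, giving a contribution $\lesssim N^{1/(1+\eta)}\log N$.
\item For $j>M$, the number of indices with $k_j\neq0$ is at most $\sum_{j>M}|k_j|\leqslant M^{-\eta}|k|_\eta\lesssim N^{1/(1+\eta)}$. Each nonzero term is again $\lesssim\log N$, so this part obeys the same bound.
\end{itemize}

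\textbf{Step 2 (lower bound on the shifted divisors).} For $k\in\widecheck{\mathcal{S}}_{\mathrm{T}}$ I write $k^*:=k-(1,0,\dots)\neq 0$, which has finite support and satisfies $|k^*|_\eta\leqslant N+1$. Then $|\Omega_k|=|\langle k^*,\nu\rangle|$. Applying \eqref{DIO2} together with Step 1 gives
\[
|\Omega_k|\geqslant \gamma\exp\big(-C(\log T)^{\frac{\rho}{1+\eta}}\log\log T\big)=T^{-o(1)} ,
\]
where the last equality uses the hypothesis $\rho<1+\eta$. The case $k=\mathbf 0$ is handled exactly as in \eqref{dio3}. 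Since $|x(T)|\leqslant1$ by Lemma \ref{lemma:2}, I obtain $|x(T)+\Omega_kT|\geqslant T^{1-\delta}$ for any fixed small $\delta>0$, provided $T$ is large, uniformly in $k\in\widecheck{\mathcal{S}}_{\mathrm{T}}$.

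\textbf{Step 3 (integration by parts and summation).} I repeat \eqref{Djfuhe}--\eqref{hhgd} with this lower bound, which gives
\[
|D^j\varphi(x(T)+\Omega_kT)|\leqslant (2\lambda)^\iota\iota^{\beta\iota}T^{-(1-\delta)\iota}.
\]
Choosing $\iota^*\sim cT^{(1-\delta)/\beta}$, as in the proof of Lemma \ref{lemma:3}, yields the bound $\exp(-c'T^{(1-\delta)/\beta})$. This is much smaller than $\exp(-c_{15}(\log T)^\rho)$. Summing over $k$ then only requires $\sum_{k}|a_k|<\infty$. In Bourgain's setting this follows from $|a_k|\leqslant C_fe^{-r|k|_\eta}$ together with the standard bound $\sum_{k\in\mathbb{Z}^{\mathbb N}_*}e^{-r|k|_\eta}<\infty$ for $\eta\geqslant2$ (cf.\ \cite{MP21}). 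If needed, that bound can be verified directly by factorizing the sum as $\prod_j\big(1+2\sum_{m\geqslant1}e^{-r\langle j\rangle^\eta m}\big)$, whose logarithm is controlled by $\sum_j e^{-r\langle j\rangle^\eta}<\infty$.

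The main obstacle is Step 1. One has to make sure the product in \eqref{DIO2} grows only like $\exp\big(O(N^{1/(1+\eta)}\log N)\big)$ and not like $\exp(O(N))$; only the former keeps it subexponential in $\log T$ when $\rho<1+\eta$. A secondary point is that the shift $k\mapsto k^*$ changes $|k|_\eta$ by just $1$, which is harmless.
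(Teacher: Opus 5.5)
Your proof is correct and follows the same scheme as the paper. You bound the Diophantine product by $\exp\big(\mathcal{O}((\log T)^{\rho/(1+\eta)}\log\log T)\big)$, using that the support of $k$ has size $\lesssim(\log T)^{\rho/(1+\eta)}$. This gives a uniform lower bound on $|x(T)+\Omega_kT|$, after which you integrate by parts with an optimized $\iota$ and sum using $\sum|a_k|<\infty$.

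The differences are minor, and each is valid:
\begin{itemize}
\item You count the support by splitting at $M=N^{1/(1+\eta)}$, whereas the paper orders the nonzero indices and uses $\langle j_s\rangle\geqslant s$.
\item You handle the shift $k\mapsto k^*$ explicitly and absorb the subexponential loss into $T^{1-\delta}$.
\item You prove $\sum_k e^{-r|k|_\eta}<\infty$ by factorizing over coordinates, which works for any $\eta>0$. The paper instead invokes the cardinality bound $\#\{|k|_\eta=\vartheta\}\lesssim\vartheta^{\vartheta^{1/\eta}}$ from \cite[Lemma 8.4]{TL24b}.
\end{itemize}
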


\begin{proof}
	To establish the desired exponential decay estimate, we first precisely estimate the growth order arising from the infinite-dimensional small divisors. For any fixed $k \in \widecheck{\mathcal{S}}_{\mathrm{T}}$, let $\mathscr{N}$ denote the number of its non-zero components, whose indices are ordered as $j_1 < j_2 < \cdots < j_{\mathscr{N}}$. Clearly, $\langle j_s \rangle \geqslant s$ for all $1 \leqslant s \leqslant \mathscr{N}$. Since the non-zero integer components satisfy $|k_{j_s}| \geqslant 1$, it follows from the definition of the metric $|\cdot|_\eta$ and monotonicity that
	\begin{align*}
		\mathscr{N} &= \left( (1 + \eta)\int_0^\mathscr{N} y^\eta \,dy \right)^{\frac{1}{1 + \eta}} \leqslant \left( (1 + \eta)\sum_{s = 1}^\mathscr{N} s^\eta \right)^{\frac{1}{1 + \eta}} \leqslant \left( (1 + \eta)\sum_{s = 1}^\mathscr{N} \langle j_s \rangle^\eta \right)^{\frac{1}{1 + \eta}} \\
		&\leqslant \left( (1 + \eta)\sum_{s = 1}^\mathscr{N} \langle j_s \rangle^\eta |k_{j_s}| \right)^{\frac{1}{1 + \eta}} = \left( (1 + \eta) |k|_\eta \right)^{\frac{1}{1 + \eta}} \leqslant (1 + \eta)^{\frac{1}{1 + \eta}} (\log T)^{\frac{\rho}{1 + \eta}}.
	\end{align*}
	Furthermore, since $\eta \geqslant 2$, we have
	\[ 
	\langle j_s \rangle^\mu |k_{j_s}|^\mu = (\langle j_s \rangle |k_{j_s}|)^\mu \leqslant (\langle j_s \rangle^\eta |k_{j_s}|)^\mu \leqslant \Big( \sum_{j \in \mathbb{N}} \langle j \rangle^\eta |k_j| \Big)^\mu = |k|_\eta^\mu \leqslant (\log T)^{\rho\mu}, \quad 1 \leqslant s \leqslant \mathscr{N}. 
	\]
	Combining the above estimates yields that for $T$ sufficiently large,
	\begin{align}
		\max_{k \in \widecheck{\mathcal{S}}_{\mathrm{T}}} \prod_{j \in \mathbb{N}} \big( 1 + \langle j \rangle^\mu |k_j|^\mu \big) &= \max_{k \in \widecheck{\mathcal{S}}_{\mathrm{T}}} \exp\Big( \sum_{s = 1}^\mathscr{N} \log (1 + \langle j_s \rangle^\mu |k_{j_s}|^\mu ) \Big) \notag\\
		&\leqslant \max_{k \in \widecheck{\mathcal{S}}_{\mathrm{T}}} \exp\Big( \sum_{s = 1}^\mathscr{N} \log ( 1 + (\log T)^{\rho\mu} ) \Big) \notag\\
		&\leqslant \exp\Big( c_{16} \max_{k \in \widecheck{\mathcal{S}}_{\mathrm{T}}} \mathscr{N} \log \log T \Big) \notag \\
		&\leqslant \exp\Big( c_{17} (\log T)^{\frac{\rho}{1 + \eta}} \log \log T \Big), \label{wqdio}
	\end{align}
	provided that we choose constants $c_{16} > 2\rho \mu$ and $c_{17} > (1 + \eta)^{1/(1 + \eta)} c_{16}$.
	
	By an analysis analogous to that in Lemmas \ref{lemma:3} and \ref{lemma:25}, one can verify that the term $\left| x(T) + \Omega_k T \right|$ exhibits infinite-dimensional Diophantine nonresonance. Specifically, in view of \eqref{wqdio}, there exists a constant $c_{18} > 0$ such that for $T \gg 1$, we have
	\[ 
	\left| x(T) + \Omega_k T \right| \geqslant c_{18} T \exp\Big(- c_{17} (\log T)^{\frac{\rho}{1 + \eta}} \log \log T\Big), \quad \forall k \in \widecheck{\mathcal{S}}_{\mathrm{T}}. 
	\]
	Consequently, for any $k \in \widecheck{\mathcal{S}}_{\mathrm{T}}$ and all sufficiently large $T$ (independent of $k$), the following estimates hold for any positive integer $\iota$ (which may depend on $T$):
	\[ 
	\left| D^j \varphi(x(T) + \Omega_k T) \right| \leqslant c_{19}^\iota T^{- \iota} \exp\Big(c_{17} (\log T)^{\frac{\rho}{1 + \eta}} (\log \log T) \iota\Big) \iota^{\beta \iota}, \quad 0 \leqslant j \leqslant 1, 
	\]
	where $c_{19} := \lambda c_{18}^{-1} > 0$. Optimizing this bound by choosing the integer
	\[ 
	\iota^* = \left\lfloor c_{19}^{- \frac{1}{\beta}} e^{-1} T^{\frac{1}{\beta}} \exp\Big(- \beta^{-1}c_{17} (\log T)^{\frac{\rho}{1 + \eta}} \log \log T\Big) \right\rfloor, 
	\]
	yields the following uniform exponential estimate as $T \to +\infty$:
	\begin{align}
		\left| D^j \varphi(x(T) + \Omega_k T) \right| &\leqslant \exp\bigg(- c_{20} T^{\frac{1}{\beta}} \exp\Big(- \beta^{-1}c_{17} (\log T)^{\frac{\rho}{1 + \eta}} \log \log T\Big)\bigg) \notag \\
		&\leqslant \exp\big(- c_{21} (\log T)^\rho\big), \label{zsgj3}	
	\end{align}
	for all $k \in \widecheck{\mathcal{S}}_{\mathrm{T}}$ and $0 \leqslant j \leqslant 1$, where $0 < c_{20} < \beta e^{-1} c_{19}^{-1/\beta}$ and $c_{21} > 0$ is a suitable constant.
	
	To obtain the exponential estimate for the sum over all $k \in \widecheck{\mathcal{S}}_{\mathrm{T}}$, it remains to establish a fundamental summability result on the infinite-dimensional lattice. Distinct from the finite-dimensional setting, the infinite-dimensional spatial structure necessitates precise cardinality estimates. By the analyticity of $f$, there exist constants $C_f, r > 0$ such that $|a_k| \leqslant C_f \exp(-r|k|_\eta)$ for all $k \in \mathbb{Z}_*^\mathbb{N}$. Recall from \cite[Lemma 8.4]{TL24b} that for sufficiently large integers $\vartheta$,
	\[ 
	\sum_{k \in \Theta, |k|_\eta = \vartheta} 1 := \# \{k \in \Theta :\; |k|_\eta = \vartheta\} \lesssim \vartheta^{\vartheta^{\frac{1}{\eta}}}. 
	\]
	Applying this bound and utilizing the condition $\eta \geqslant 2$, we deduce that for any constant $c_{22} > 0$,
	\begin{align}
		\sum_{k \in \Theta } {|a_k|} &\lesssim \sum_{k \in \Theta} \exp(-c_{22}|k|_\eta) 
		\lesssim \sum_{\vartheta=1}^\infty \sum_{k \in \Theta, |k|_\eta = \vartheta} \exp(-c_{22}|k|_\eta) \notag\\
		&= \sum_{\vartheta=1}^\infty \left( \sum_{k \in \Theta, |k|_\eta = \vartheta} 1 \right) \exp(-c_{22}\vartheta) 
		\lesssim \sum_{\vartheta=1}^\infty \vartheta^{\vartheta^{\frac{1}{\eta}}} \exp(-c_{22}\vartheta) \notag\\
		&= \sum_{\vartheta=1}^\infty \exp\big(-c_{22}\vartheta + \vartheta^{\frac{1}{\eta}}\log \vartheta\big) 
		\leqslant \sum_{\vartheta=1}^\infty \exp\big(-c_{22}\vartheta + \vartheta^{\frac{1}{2}}\log \vartheta\big) \notag \\
		&\lesssim \sum_{\vartheta=1}^\infty \exp\big(-\frac{c_{22}}{2}\vartheta\big) < +\infty. \label{333}
	\end{align}
	
	Since the sum $\sum_{k \in \widecheck{\mathcal{S}}_{\mathrm{T}}} |a_k|$ is dominated by \eqref{333}, we ultimately conclude that as $T \to +\infty$,
	\[ 
	\sum_{k \in \widecheck{\mathcal{S}}_{\mathrm{T}}} a_k D^j\varphi(x(T) + \Omega_k T) = \mathcal{O}\Big(\exp\big(-c_{15} (\log T)^\rho\big)\Big), \quad 0 \leqslant j \leqslant 1. 
	\]
	Setting $c_{15} := c_{21} > 0$ then yields the desired result, which completes the proof of Lemma \ref{lemma:5}.
\end{proof}

\begin{lemma}\label{lemma:6}
	There exists a constant $c_{22} > 0$ such that the following asymptotic estimate holds as $T \to +\infty$:
	\[\sum_{k \in \widecheck{\mathcal{S}}_{\mathrm{R}}} a_k D^j\varphi(x(T) + \Omega_k T) = \mathcal{O}\Big(\exp\big(-c_{22} (\log T)^\rho\big)\Big), \quad 0 \leqslant j \leqslant 1.\]
\end{lemma}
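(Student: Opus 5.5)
The remainder sum is the regularity-dominated part, so I would follow Lemma \ref{lemma:4} and Lemma \ref{lemma:26}. No small-divisor information is needed. First, Lemma \ref{lemma:1} gives $|D^j\varphi|\leqslant 1$ for $0\leqslant j\leqslant 1$ uniformly on $\mathbb{R}$. The triangle inequality then reduces the claim to the tail bound
\[
\Big|\sum_{k \in \widecheck{\mathcal{S}}_{\mathrm{R}}} a_k D^j\varphi(x(T) + \Omega_k T)\Big| \leqslant \sum_{k\in\Theta,\ |k|_\eta>(\log T)^\rho} |a_k| \leqslant C_f\sum_{k\in\Theta,\ |k|_\eta>(\log T)^\rho} e^{-r|k|_\eta}.
\]
This holds uniformly in $T$, so the behaviour of $x(T)$ plays no role.

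Next I would organise the tail by level sets. Since $|k|_\eta$ is a nonnegative integer, I group the terms according to $\vartheta=|k|_\eta$ and apply the cardinality bound from \cite[Lemma 8.4]{TL24b}, $\#\{k\in\Theta: |k|_\eta=\vartheta\}\lesssim \vartheta^{\vartheta^{1/\eta}}$, valid for large $\vartheta$. This gives
\[
\sum_{\vartheta>(\log T)^\rho} \exp\big(-r\vartheta+\vartheta^{1/\eta}\log\vartheta\big).
\]
Because $\eta\geqslant 2$, we have $\vartheta^{1/\eta}\log\vartheta\leqslant \vartheta^{1/2}\log\vartheta\leqslant \tfrac r2\vartheta$ once $\vartheta$ is large. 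For $T\gg1$ every index in the tail is large, since $(\log T)^\rho\to\infty$. The sum is therefore bounded by a geometric tail $\sum_{\vartheta>(\log T)^\rho}e^{-r\vartheta/2}\lesssim \exp(-\tfrac r2(\log T)^\rho)$. This yields the lemma with any $0<c_{22}<r/2$, and $c_{22}=r/2$ itself works up to constants.

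The only point needing care is the entropy-versus-decay comparison: the cardinality of the level sets must grow subexponentially in $\vartheta$, uniformly in the truncation. This is exactly where $\eta\geqslant2$ (indeed $\eta>1$ suffices) enters, and the computation parallels \eqref{333}, now restricted to $\vartheta>(\log T)^\rho$. I do not expect any genuine obstacle beyond checking that the constants in the cardinality bound do not depend on $T$.
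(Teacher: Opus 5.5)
Your proposal is correct and takes essentially the same route as the paper. Both arguments bound $|D^j\varphi|\leqslant 1$ via Lemma \ref{lemma:1}, use $|a_k|\leqslant C_f e^{-r|k|_\eta}$, and control the lattice entropy through the cardinality bound of \cite[Lemma 8.4]{TL24b}. The only cosmetic difference is that the paper splits $e^{-r|k|_\eta}=e^{-c_{22}|k|_\eta}e^{-(r-c_{22})|k|_\eta}$, pulls out $e^{-c_{22}(\log T)^\rho}$, and reuses the global summability \eqref{333}, which gives any $c_{22}<r$; you instead run the level-set count directly on the tail.
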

\begin{proof}
	By Lemma \ref{lemma:1}, and noting that $|k|_\eta > (\log T)^\rho$ holds for all $k \in \widecheck{\mathcal{S}}_{\mathrm{R}}$, we deduce that for any $0 < c_{22} < r$, the following estimate holds as $T \to +\infty$:
	\begin{align}
		\left| \sum_{k \in \widecheck{\mathcal{S}}_{\mathrm{R}}} a_k D^j \varphi(x(T) + \Omega_k T) \right| 
		&\leqslant \sum_{k \in \widecheck{\mathcal{S}}_{\mathrm{R}}} |a_k| \big| D^j\varphi(x(T) + \Omega_k T) \big|\notag \\
		&\lesssim \sum_{k \in \widecheck{\mathcal{S}}_{\mathrm{R}}} \exp(-r|k|_\eta) \notag \\
		&= \sum_{k \in \widecheck{\mathcal{S}}_{\mathrm{R}}} \exp(-c_{22}|k|_\eta) \exp\big(-(r - c_{22})|k|_\eta\big) \notag \\
		&\leqslant \exp\big(-c_{22}(\log T)^\rho\big) \sum_{k \in \Theta} \exp\big(-(r - c_{22})|k|_\eta\big) \notag \\
		&\lesssim \exp\big(-c_{22}(\log T)^\rho\big), \quad 0 \leqslant j \leqslant 1, \notag
	\end{align}
	where the last inequality follows from \eqref{333}. This completes the proof of Lemma \ref{lemma:6}.
\end{proof}

In view of \eqref{fenjie2}, an application of Lemmas \ref{lemma:5} and \ref{lemma:6} yields the following exponential decay bound as $T \to +\infty$:
\[
\sum_{k \in \Theta} a_k D^j \varphi(x(T) + \Omega_k T) = \mathcal{O}\Big(\exp\big(-c_{23} (\log T)^\rho\big)\Big), \quad 0 \leqslant j \leqslant 1,
\]
where we have set $c_{23} := \min\{c_{15}, c_{22}\} > 0$. Following reasoning similar to that used for \eqref{xT} and \eqref{bjgj}, we ultimately deduce that
\[
\nu_1^T - \nu_1 = \mathcal{O}\big(T^{-1}x(T)\big) = \mathcal{O}\Big(T^{-1} \exp\big(-c_{23} (\log T)^\rho\big)\Big) = \mathcal{O}\Big(\exp\big(-c_{\mathrm{III}} (\log T)^\rho\big)\Big)
\]
as $T \to +\infty$. Here, $2 \leqslant \rho < 1+\eta$ can be chosen arbitrarily, and $ c_{\mathrm{III}} := c_{23}>0$. This concludes the proof of Theorem \ref{TH3}.

\subsection{Proof of Theorem \ref{TH4}: Towards a general lattice setting}\label{SUBSEC24}
We first emphasize that, although both concern almost periodic frequency map analysis, the analysis presented in this section differs from that of Section \ref{SUBSEC23} in several respects. As before, we aim to estimate the asymptotic behavior of the series $\sum_{k \in \Upsilon} a_k D^j \varphi(x(T) + \Omega_k T)$ as $T \to +\infty$ for $0 \leqslant j \leqslant 1$, where $\Upsilon := \mathbb{Z}_{\Phi}^{\mathbb{N}} \setminus \{(1,0,\dots)\}$ and $\Omega_k := \langle k, \nu \rangle - \nu_1$ for all $k \in \Upsilon$. Here, the function $\Phi$ represents the weight in the weighted norm $\|k\|_\Phi := \sum_{j \in \mathbb{N}} \Phi (\langle j \rangle) |k_j|$, and the continuous map $x(T)$ is as obtained in Lemma \ref{lemma:2}.

We continue to utilize the infinite-dimensional Diophantine condition given by \eqref{DIO2} for the frequency vector $\nu$:
\[
|\langle k, \nu \rangle| > \frac{\gamma}{\prod\limits_{j \in \mathbb{N}} (1 + \langle j \rangle^\mu |k_j|^\mu)}, \quad \forall k \in \mathbb{Z}^\mathbb{N} \text{\;\;with\;\;} 0 < \sum\limits_{j \in \mathbb{N}} {|{k_j}|} < + \infty ,
\]
where $\gamma > 0$ and $\mu > 1$. Observe that, by definition, this condition is independent of the specific spatial structure; however, the subsequent small divisor estimates heavily rely on it, as will be shown in the sequel.

We decompose the index set $\Upsilon$ into two disjoint subsets:
\[
\widetilde{\mathcal{S}}_{\mathrm{T}} := \big\{ k \in \Upsilon :\; \|k\|_\Phi \leqslant \mathscr{K}(T) \big\},\quad \widetilde{\mathcal{S}}_{\mathrm{R}} := \big\{ k \in \Upsilon :\; \|k\|_\Phi > \mathscr{K}(T) \big\},
\]
where the truncation function $\mathscr{K}(T)$ will be determined later. According to this decomposition, we have
\begin{align}
	&\;\sum_{k \in \Upsilon} a_k D^j\varphi(x(T) + \Omega_k T)\notag  \\
	= &\;\sum_{k \in \widetilde{\mathcal{S}}_{\mathrm{T}}} a_k D^j \varphi(x(T) + \Omega_k T) + \sum_{k \in \widetilde{\mathcal{S}}_{\mathrm{R}}} a_k D^j \varphi(x(T) + \Omega_k T), \quad 0 \leqslant j \leqslant 1.\label{fjzh} 
\end{align}

To estimate the aforementioned terms via distinct approaches, analogous to those in Section \ref{SEC21}, we require two key lemmas, wherein the general spatial structure plays a fundamental role.

\begin{lemma}\label{lemma:9}
	Suppose that there exists a constant $b \gg 1$ such that for all sufficiently large $T$,
	\[T \gg \exp\big(b\Gamma(\mathscr{K}(T))\big),\]
	where $\Gamma(x) := \Psi(x)\log\Lambda(x)$, $\Psi(x)$ is the inverse of the map $x \mapsto \int_0^x \Phi(y)\,dy$, and
	\[
	\Lambda(x) := \begin{cases}
		x, & \text{if\;\;} \Phi(x) \geqslant x, \\
		\Phi^{-1}(x), & \text{if\;\;} \Phi(x) < x.
	\end{cases}
	\]
	Then there exist constants $c_{24},c_{25} > 0$ such that the following asymptotic estimate holds as $T \to +\infty$:
	\[\sum_{k \in \widetilde{\mathcal{S}}_{\mathrm{T}}} a_k D^j\varphi(x(T) + \Omega_k T) = \mathcal{O}\Big(\exp\Big(-c_{24}T^{\frac{1}{\beta}}\exp\big(-c_{25}\Gamma(\mathscr{K}(T))\big)\Big)\Big), \quad 0 \leqslant j \leqslant 1.\]
\end{lemma}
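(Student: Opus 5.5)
The plan follows the template of Lemmas \ref{lemma:3}, \ref{lemma:25} and \ref{lemma:5}. The new ingredient is a small divisor estimate adapted to the general spatial structure $\mathbb{Z}_\Phi^\mathbb{N}$. The goal is to show that
\[
\max_{k\in\widetilde{\mathcal{S}}_{\mathrm{T}}}\prod_{j\in\mathbb{N}}\big(1+\langle j\rangle^\mu|k_j|^\mu\big)\leqslant \exp\big(c\,\Gamma(\mathscr{K}(T))\big)
\]
for some constant $c>0$ and all sufficiently large $T$. Once this holds, the rest of the argument transfers directly from Lemma \ref{lemma:5}.

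\textbf{Step 1: bounding the number of non-zero components.} Fix $k\in\widetilde{\mathcal{S}}_{\mathrm{T}}$ with non-zero indices $j_1<\dots<j_{\mathscr{N}}$. Since $\langle j_s\rangle\geqslant s$ and $\Phi$ is increasing,
\[
\int_0^{\mathscr{N}}\Phi(y)\,dy\leqslant\sum_{s=1}^{\mathscr{N}}\Phi(s)\leqslant\|k\|_\Phi\leqslant\mathscr{K}(T),
\]
up to harmless modifications near $y\in[0,1]$, where one uses $\Phi\geqslant 1$. Hence $\mathscr{N}\leqslant\Psi(\mathscr{K}(T))$.

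\textbf{Step 2: bounding each factor.} For each $s$ I need $\langle j_s\rangle|k_{j_s}|\lesssim\mathrm{poly}(\Lambda(\mathscr{K}(T)))$. From $\Phi(\langle j_s\rangle)|k_{j_s}|\leqslant\mathscr{K}(T)$ and $\Phi\geqslant1$ we get $|k_{j_s}|\leqslant\mathscr{K}(T)$ and $\Phi(\langle j_s\rangle)\leqslant\mathscr{K}(T)$. I split into two cases.
\begin{itemize}
\item If $\Phi(x)\geqslant x$, then $\langle j_s\rangle\leqslant\Phi(\langle j_s\rangle)\leqslant\mathscr{K}$, so $\langle j_s\rangle|k_{j_s}|\leqslant\mathscr{K}^2$. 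Here $\Lambda(\mathscr{K})=\mathscr{K}$.
\item If $\Phi(x)<x$, then $\langle j_s\rangle\leqslant\Phi^{-1}(\mathscr{K})=\Lambda(\mathscr{K})$, while $|k_{j_s}|\leqslant\mathscr{K}\leqslant\Lambda(\mathscr{K})$.
\end{itemize}
In both cases each factor is bounded by $1+\Lambda(\mathscr{K})^{2\mu}$. Combined with Step 1, the product is at most
\[
\exp\big(\mathscr{N}\log(1+\Lambda^{2\mu})\big)\leqslant\exp\big(c\,\Psi(\mathscr{K})\log\Lambda(\mathscr{K})\big)=\exp\big(c\,\Gamma(\mathscr{K})\big).
\]
I expect the case analysis to be the delicate point, in particular verifying that $\Lambda$ is well defined and tends to infinity, and that the two regimes are handled uniformly. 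If $\Phi$ oscillates around the diagonal, I would interpret the condition pointwise at $x=\mathscr{K}$, or simply use the bound $\max\{\mathscr{K},\Phi^{-1}(\mathscr{K})\}$.

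\textbf{Step 3: nonresonance and optimization.} As in \eqref{eq1}, shifting $k$ to $k^*$ only changes the product by a bounded factor. Using $|x(T)|\leqslant1$ and the hypothesis $T\gg\exp(b\Gamma(\mathscr{K}(T)))$ with $b>c$, which absorbs the $-1$, we obtain
\[
|x(T)+\Omega_kT|\geqslant c'\,T\exp\big(-c\,\Gamma(\mathscr{K}(T))\big)\quad\text{for all }k\in\widetilde{\mathcal{S}}_{\mathrm{T}}.
\]
Integrating by parts $\iota$ times and applying \eqref{hhgd} gives
\[
|D^j\varphi(x(T)+\Omega_kT)|\leqslant C^\iota T^{-\iota}e^{c\iota\Gamma(\mathscr{K})}\iota^{\beta\iota}.
\]
Choosing $\iota^*=\lfloor e^{-1}C^{-1/\beta}T^{1/\beta}e^{-c\Gamma(\mathscr{K})/\beta}\rfloor$, which is at least $1$ again by the hypothesis on $T$, yields the bound $\exp\big(-c_{24}T^{1/\beta}e^{-c_{25}\Gamma(\mathscr{K}(T))}\big)$ uniformly in $k$, with $c_{25}=c/\beta$.

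\textbf{Step 4: summing over $k$.} Finally, multiply by $\sum_{k\in\Upsilon}|a_k|\leqslant\sum|a_k|e^{r\|k\|_\Phi}<\infty$. This is finite by the stronger analyticity assumed in this setting, so no cardinality estimate is needed here, and the conclusion of the lemma follows.
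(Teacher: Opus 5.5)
Your proposal is correct and follows essentially the same route as the paper. You bound the number of non-zero components by $\Psi(\mathscr{K}(T))$ and bound each factor through the two cases defining $\Lambda$, which gives $\prod_j(1+\langle j\rangle^\mu|k_j|^\mu)\leqslant\exp(c\,\Gamma(\mathscr{K}(T)))$; you then transfer the nonresonance, integration-by-parts and $\iota^*$-optimization argument and sum using $\sum|a_k|e^{r\|k\|_\Phi}<\infty$. The only differences are minor: in the case $\Phi(x)\geqslant x$ your bound $\langle j_s\rangle|k_{j_s}|\leqslant\mathscr{K}^2$ is cruder than the paper's $\langle j_s\rangle|k_{j_s}|\leqslant\Phi(\langle j_s\rangle)|k_{j_s}|\leqslant\mathscr{K}$, which only changes $c_{25}$, and you spell out the shift $k\mapsto k^*$ and the use of $T\gg\exp(b\Gamma(\mathscr{K}(T)))$ with $b>c$, which the paper leaves implicit.
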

\begin{proof}
	We begin by establishing the small divisor estimate arising from the infinite-dimensional Diophantine condition within the general spatial framework. While the analysis closely parallels that of Lemma \ref{lemma:5}, it involves several fundamental distinctions. For any fixed $k \in \widetilde{\mathcal{S}}_{\mathrm{T}}$, let $\mathscr{M}$ denote the number of its non-zero components, indexed such that $j_1 < j_2 < \cdots < j_{\mathscr{M}}$. Note that $\langle j_s \rangle \geqslant s$ for all $1 \leqslant s \leqslant \mathscr{M}$. Thus, we obtain
	\[\mathscr{K}(T) \geqslant \| k \|_\Phi = \sum_{s = 1}^\mathscr{M} \Phi (\langle j_s \rangle)|k_{j_s}| \geqslant \sum_{s = 1}^{\mathscr{M}} \Phi (\langle j_s \rangle) \geqslant \int_0^{\mathscr{M}} \Phi (y)\,dy.\]
	By the definition of $\Psi(x)$, it immediately follows that
	\begin{equation}\label{Mgj}
		\mathscr{M} \leqslant \Psi (\mathscr{K}(T)).
	\end{equation}
	Next, we estimate $\langle j_s \rangle |k_{j_s}|$ for all $1 \leqslant s \leqslant \mathscr{M}$. To this end, we distinguish two cases based on the growth of $\Phi(x)$. If $\Phi(x) \geqslant x$, we have
	\begin{equation}\label{jkgj1}
		\langle j_s \rangle |k_{j_s}| \leqslant \Phi(\langle j_s \rangle)|k_{j_s}| \leqslant \sum_{s = 1}^{\mathscr{M}} \Phi(\langle j_s \rangle)|k_{j_s}| = \|k\|_\Phi \leqslant \mathscr{K}(T),\quad 1 \leqslant s \leqslant \mathscr{M}.
	\end{equation}
	Conversely, if $\Phi(x) < x$, it follows that $\Phi(\langle j_s \rangle) \leqslant \|k\|_\Phi \leqslant \mathscr{K}(T)$, which implies $\langle j_s \rangle \leqslant \Phi^{-1}(\mathscr{K}(T))$. Noting that $|k_{j_s}| \leqslant \|k\|_\Phi \leqslant \mathscr{K}(T)$, we deduce that
	\begin{equation}\label{jkgj2}
		\langle j_s \rangle |k_{j_s}| \leqslant \Phi^{-1}(\mathscr{K}(T))\mathscr{K}(T),\quad 1 \leqslant s \leqslant \mathscr{M}.
	\end{equation}
	Utilizing \eqref{jkgj1} and \eqref{jkgj2}, we obtain from the definition of $\Lambda(x)$ that, for sufficiently large $T$,
	\begin{align}
		&\;\max \big\{ \log \big(1 + (\mathscr{K}(T))^\mu \big), \log \big(1 + (\Phi^{-1}(\mathscr{K}(T))\mathscr{K}(T))^\mu \big)\big\} \notag \\
		 \leqslant &\;2\mu \max \big\{ \log \mathscr{K}(T), \log (\Phi^{-1}(\mathscr{K}(T))\mathscr{K}(T))\big\} \notag \\
		 \leqslant &\;4\mu \max \big\{ \log \mathscr{K}(T), \log \Phi^{-1}(\mathscr{K}(T))\big\} \notag \\
	  \leqslant &\;4\mu \log \Lambda (\mathscr{K}(T)). \label{dsgj}
	\end{align}
	Accordingly, for sufficiently large $T$, we deduce from \eqref{Mgj}, \eqref{dsgj}, and the definition of $\Gamma(x)$ that
	\begin{align}
		\max_{k \in \widetilde{\mathcal{S}}_{\mathrm{T}}} \prod_{j \in \mathbb{N}} (1 + \langle j \rangle^\mu |k_j|^\mu) &= \max_{k \in \widetilde{\mathcal{S}}_{\mathrm{T}}} \exp\bigg(\sum_{s = 1}^{\mathscr{M}} \log (1 + \langle j_s \rangle^\mu |k_{j_s}|^\mu)\bigg) \notag \\
		&\leqslant \max_{k \in \widetilde{\mathcal{S}}_{\mathrm{T}}} \exp\bigg(\sum_{s = 1}^{\mathscr{M}} \max \big\{ \log \big(1 + (\mathscr{K}(T))^\mu \big), \log \big(1 + (\Phi^{-1}(\mathscr{K}(T))\mathscr{K}(T))^\mu \big)\big\} \bigg) \notag \\
		&\leqslant \exp\Big(4\mu \max_{k \in \widetilde{\mathcal{S}}_{\mathrm{T}}} \mathscr{M}\log \Lambda (\mathscr{K}(T))\Big) \notag \\
		&\leqslant \exp\big(4\mu \Psi (\mathscr{K}(T))\log \Lambda (\mathscr{K}(T))\big) \notag \\
		&= \exp\big(c_{26}\Gamma (\mathscr{K}(T))\big), \label{wqgjyb}
	\end{align}
	where $c_{26} := 4\mu > 0$.
	
	Applying \eqref{wqgjyb} and proceeding as in Lemma \ref{lemma:5}, we deduce the existence of a constant $c_{27} > 0$ such that for $T \gg 1$, the following nonresonance condition holds:
	\[|x(T) + \Omega_k T| \geqslant c_{27}T\exp\big(-c_{26}\Gamma(\mathscr{K}(T))\big),\quad \forall k \in \widetilde{\mathcal{S}}_{\mathrm{T}}.\]
	Therefore, for any $k \in \widetilde{\mathcal{S}}_{\mathrm{T}}$, all sufficiently large $T$ (independent of $k$), and any positive integer $\iota$ (which may depend on $T$), the following bound holds:
	\[|D^j\varphi (x(T) + \Omega_k T)| \leqslant c_{28}^\iota T^{-\iota }\exp\big(c_{26}\Gamma(\mathscr{K}(T))\iota\big)\iota^{\beta \iota},\quad 0 \leqslant j \leqslant 1,\]
	where $c_{28} := \lambda c_{27}^{-1} > 0$. Under the assumptions of Lemma \ref{lemma:9}, optimizing $\iota$ by choosing
	\[\iota^* = \left\lfloor c_{28}^{-\frac{1}{\beta}}e^{-1}T^{\frac{1}{\beta}}\exp\Big(-\frac{c_{26}}{\beta}\Gamma(\mathscr{K}(T))\Big) \right\rfloor\]
	yields the following uniform estimate as $T \to +\infty$:
	\begin{equation}\label{dxgj}
		|D^j\varphi (x(T) + \Omega_k T)| \leqslant \exp\Big(-c_{24}T^{\frac{1}{\beta}}\exp\big(-c_{25}\Gamma(\mathscr{K}(T))\big)\Big),\quad \forall k \in \widetilde{\mathcal{S}}_{\mathrm{T}},\quad 0 \leqslant j \leqslant 1,
	\end{equation}
	where $0 < c_{24} < \beta e^{-1}c_{28}^{-1/\beta}$ and $c_{25} := \beta^{-1}c_{26}>0$.
	
	Finally, summing \eqref{dxgj} over $k \in \widetilde{\mathcal{S}}_{\mathrm{T}}$ and invoking the analyticity of $f$ (which implies that $\sum_{k \in \mathbb{Z}_\Phi^\mathbb{N}} |a_k| < +\infty$), we complete the proof of Lemma \ref{lemma:9}.
\end{proof}

\begin{lemma}\label{lemma:10}
	There exists a constant $c_{29} > 0$ such that the following asymptotic estimate holds as $T \to +\infty$:
	\[\sum_{k \in \widetilde{\mathcal{S}}_{\mathrm{R}}} a_k D^j\varphi(x(T) + \Omega_k T) = \mathcal{O}\Big(\exp\big(-c_{29}\mathscr{K}(T)\big)\Big), \quad 0 \leqslant j \leqslant 1.\]
\end{lemma}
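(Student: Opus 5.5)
The plan is to argue exactly as in Lemma \ref{lemma:6}, with the stronger analyticity notion of Section \ref{SUBSEC132} playing the role that the cardinality estimate \eqref{333} played there. The convenient feature of the present setting is that analyticity is defined by a weighted summability condition, $\sum_{k \in \mathbb{Z}_\Phi^\mathbb{N}} |a_k| \exp(r\|k\|_\Phi) < +\infty$, and not by pointwise decay of the $a_k$. Consequently no counting of lattice points with $\|k\|_\Phi = \vartheta$ is needed.

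Fix any constant $0 < c_{29} \leqslant r$. For $k \in \widetilde{\mathcal{S}}_{\mathrm{R}}$ we have $\|k\|_\Phi > \mathscr{K}(T)$, which gives
\[
|a_k| = |a_k| e^{r\|k\|_\Phi} e^{-r\|k\|_\Phi} \leqslant |a_k| e^{r\|k\|_\Phi} e^{-c_{29}\mathscr{K}(T)} .
\]
Lemma \ref{lemma:1} supplies the uniform bounds $|D^j\varphi| \leqslant 1$ for $0 \leqslant j \leqslant 1$, valid for every argument and in particular at $x(T)+\Omega_k T$. With this, the chain of estimates is
\[
\Big|\sum_{k \in \widetilde{\mathcal{S}}_{\mathrm{R}}} a_k D^j\varphi(x(T)+\Omega_k T)\Big| \leqslant \sum_{k \in \widetilde{\mathcal{S}}_{\mathrm{R}}} |a_k| \leqslant e^{-c_{29}\mathscr{K}(T)} \sum_{k \in \mathbb{Z}_\Phi^\mathbb{N}} |a_k| e^{r\|k\|_\Phi} \lesssim \exp\big(-c_{29}\mathscr{K}(T)\big).
\]
The implicit constant is the analytic norm of $f$, so it is independent of $T$, and the bound holds uniformly in $j \in \{0,1\}$.

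There is essentially no obstacle. The only points to check are that absolute convergence justifies the termwise bound, and that the truncation function $\mathscr{K}(T)$ does not enter beyond the definition of $\widetilde{\mathcal{S}}_{\mathrm{R}}$. The estimate therefore holds for any choice of $\mathscr{K}(T)$, including the one to be fixed later when balancing against Lemma \ref{lemma:9}. This is why the stronger notion of analyticity was imposed in Section \ref{SUBSEC132}: it removes the cardinality difficulty that a general $\Phi$ would otherwise create.
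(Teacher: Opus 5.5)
Your proposal is correct and follows the paper's proof essentially step for step. The paper likewise takes $c_{29} := r$, uses $|D^j\varphi| \leqslant 1$ from Lemma~\ref{lemma:1}, and bounds the sum by $\exp(-r\mathscr{K}(T))\sum_{k} |a_k| \exp(r\|k\|_\Phi)$; it also points out that the weighted analyticity assumption removes the need for the cardinality estimate \eqref{333}.
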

\begin{proof}
	Recall that the analyticity of $f$ implies that there exists $r > 0$ such that 
	\[\sum_{k \in \mathbb{Z}_\Phi^\mathbb{N}} |a_k| \exp(r \|k\|_\Phi) < +\infty.\]
	This analyticity allows us to dispense with the cardinality estimates required in Lemma \ref{lemma:6} (see, e.g., \eqref{333}) and directly establish rapid decay, marking a crucial distinction. Therefore, by Lemma \ref{lemma:1} and the definition of the set $\widetilde{\mathcal{S}}_{\mathrm{R}}$, setting $c_{29} := r>0$, we deduce that, as $T \to +\infty$,
	\begin{align*}
		\left| \sum_{k \in \widetilde{\mathcal{S}}_{\mathrm{R}}} a_k D^j \varphi(x(T) + \Omega_k T) \right| 
		&\leqslant \sum_{k \in \widetilde{\mathcal{S}}_{\mathrm{R}}} |a_k| \big| D^j\varphi(x(T) + \Omega_k T) \big| \\
		&\leqslant \sum_{k \in \widetilde{\mathcal{S}}_{\mathrm{R}}} |a_k| \exp(r \|k\|_\Phi) \exp(-r \|k\|_\Phi) \\
		&\leqslant \bigg( \sum_{k \in \mathbb{Z}_\Phi^\mathbb{N}} |a_k| \exp(r \|k\|_\Phi) \bigg) \exp\big(-r\mathscr{K}(T)\big) \\
		&\lesssim \exp\big(-c_{29}\mathscr{K}(T)\big), \quad 0 \leqslant j \leqslant 1.
	\end{align*}
	This confirms the desired asymptotic behavior and completes the proof of Lemma \ref{lemma:10}.
\end{proof}

We now determine the truncation function $\mathscr{K}(T)$ by appealing to the estimates provided in Lemmas \ref{lemma:9} and \ref{lemma:10}. For sufficiently large $T$, set
\begin{equation}\label{ddgx}
	T^{\frac{1}{\beta}}\exp\big(-c_{25}\Gamma(\mathscr{K}(T))\big) = \mathscr{K}(T),\quad T \gg 1,
\end{equation}
where $c_{25} > 0$ is given in Lemma \ref{lemma:9}. It is evident that $\mathscr{K}(T)$ is uniquely determined and that the hypotheses of Lemma \ref{lemma:9} are satisfied. Observe that since $\Lambda(x) \gtrsim x$ and $\Psi(x) \gg 1$ hold for sufficiently large $x$, we have
\[\Gamma(x) = \Psi(x)\log \Lambda(x) \geqslant \beta\log x.\]
Consequently, by \eqref{ddgx}, there exists a constant $c_{30} > \beta c_{25} + 1$ such that for sufficiently large $T$,
\[T = \mathscr{K}^\beta(T)\exp\big(\beta c_{25}\Gamma(\mathscr{K}(T))\big) = \exp\big(\beta \log \mathscr{K}(T) + \beta c_{25}\Gamma(\mathscr{K}(T))\big) \leqslant \exp\big(c_{30}\Gamma(\mathscr{K}(T))\big),\]
which implies that
\begin{equation}\label{KGJ}
	\mathscr{K}(T) \geqslant \Gamma^{-1}(c_{30}^{-1}\log T).
\end{equation}
At this stage, by invoking Lemmas \ref{lemma:9} and \ref{lemma:10} alongside \eqref{ddgx} and \eqref{KGJ}, we immediately obtain that, as $T \to +\infty$,
\begin{align*}
	\sum_{k \in \widetilde{\mathcal{S}}_{\mathrm{T}}} a_k D^j\varphi(x(T) + \Omega_k T) &= \mathcal{O}\Big(\exp\big(-c_{24}T^{\frac{1}{\beta}}\exp(-c_{25}\Gamma(\mathscr{K}(T)))\big)\Big) \\
	&= \mathcal{O}\Big(\exp\big(-c_{24}\mathscr{K}(T)\big)\Big) \\
	&= \mathcal{O}\Big(\exp\big(-c_{24}\Gamma^{-1}(c_{30}^{-1}\log T)\big)\Big), \quad 0 \leqslant j \leqslant 1, 
\end{align*}
and
\begin{align*}
	\sum_{k \in \widetilde{\mathcal{S}}_{\mathrm{R}}} a_k D^j\varphi(x(T) + \Omega_k T) &= \mathcal{O}\Big(\exp\big(-c_{29}\mathscr{K}(T)\big)\Big) \\
	&= \mathcal{O}\Big(\exp\big(-c_{29}\Gamma^{-1}(c_{30}^{-1}\log T)\big)\Big), \quad 0 \leqslant j \leqslant 1.
\end{align*}
Finally, substituting these estimates back into \eqref{fjzh}, we deduce that as $T \to +\infty$,
\[\sum_{k \in \Upsilon} a_k D^j\varphi(x(T) + \Omega_k T) = \mathcal{O}\Big(\exp\big(-c_{31}\Gamma^{-1}(c_{30}^{-1}\log T)\big)\Big), \quad 0 \leqslant j \leqslant 1,\]
where $ {c_{31}} = \min \left\{ {{c_{24}},{c_{29}}} \right\}>0 $. Lastly, following an analysis similar to that in \eqref{xT} and \eqref{bjgj}, we conclude that as $T \to +\infty$,
\begin{align}
\nu_1^T - \nu_1 &= \mathcal{O}(T^{-1}x(T)) \notag  \\
&= \mathcal{O}\Big(T^{-1}\exp\big(-c_{31}\Gamma^{-1}(c_{30}^{-1}\log T)\big)\Big)\notag\\ 
&= \mathcal{O}\Big(\exp\big(-c_{\rm IV}\Gamma^{-1}(c_{\rm V}\log T)\big)\Big),\label{zzgj2}
\end{align}
where $c_{\rm IV} :=c_{31}>0$, and $c_{\rm V} := c_{30}^{-1} > 0$. This completes the proof of the abstract part of Theorem \ref{TH4}.

In what follows, we proceed to establish the corollaries of Theorem \ref{TH4} (i.e., Cases \ref{item1}, \ref{item2} and \ref{item3}) in sequence. 

Assuming that $\Phi(x)\sim\log x$ as $x\to+\infty$, we deduce that $\Lambda(x)=\Phi^{-1}(x)\asymp e^x$, where $\asymp$ denotes the same order of magnitude. Moreover, evaluating the asymptotic behavior of the integral yields
\[\int_0^x\Phi(y)dy\sim\int_0^x\log ydy\sim x\log x,\]
which implies that $\Psi(x)\sim x(\log x)^{-1}$. It then follows that
\[\Gamma(x)=\Psi(x)\log\Lambda(x)\sim x^2(\log x)^{-1},\]
and thus the inverse function satisfies $\Gamma^{-1}(x)\sim\sqrt{2^{-1}x\log x}$. Hence, an application of \eqref{zzgj2} gives
\begin{align*}
	\nu_1^T-\nu_1&=\mathcal{O}\Big(\exp\big(-c_{\mathrm{IV}}^*\sqrt{2^{-1}c_{\mathrm{V}}\log T\log(c_{\mathrm{V}}\log T)}\big)\Big)\\
	&=\mathcal{O}\Big(\exp\big(-c_{\mathrm{VI}}\sqrt{\log T\log\log T}\big)\Big),\quad T\to+\infty,
\end{align*}
provided that the constants satisfy $0<c_{\mathrm{IV}}^*<c_{\mathrm{IV}}$ and $0<c_{\mathrm{VI}}<2^{-1}c_{\mathrm{IV}}^*c_{\mathrm{V}}^{1/2}$. This completes the proof of Case \ref{item1}.

Suppose alternatively that $\Phi(x)\sim e^x$ as $x\to+\infty$. In this case, we have $\Lambda(x)=x$. Turning to the integral, its asymptotic behavior can be evaluated as
\[\int_0^x\Phi(y)dy\sim\int_0^x e^y dy\sim e^x,\]
which gives rise to the relation $\Psi(x)\sim\log x$. Consequently, we obtain
\[\Gamma(x)=\Psi(x)\log\Lambda(x)\sim(\log x)^2,\]
yielding the asymptotic bound for the inverse function $\Gamma^{-1}(x) \asymp \exp(\sqrt{x})$. Therefore, invoking \eqref{zzgj2} leads to
\begin{align*}
	\nu_1^T-\nu_1&=\mathcal{O}\Big(\exp\big(-c_{\mathrm{IV}}^{**}\exp(\sqrt{c_{\mathrm{V}}\log T})\big)\Big)\\
	&=\mathcal{O}\Big(\exp\big(-\exp(c_{\mathrm{VII}}\sqrt{\log T})\big)\Big),\quad T\to+\infty,
\end{align*}
provided that the appropriate constants satisfy $0<c_{\mathrm{IV}}^{**}<c_{\mathrm{IV}}$ and $0<c_{\mathrm{VII}}<c_{\mathrm{V}}^{1/2}$. This completes the proof of Case \ref{item2}.

To establish Case \ref{item3}, we bypass the estimate in \eqref{zzgj2} and instead rely directly on the estimates provided in Lemmas \ref{lemma:9} and \ref{lemma:10}. For this purpose, we assume that $\Phi(x) \geqslant x$, hence $\Lambda(x) = x$. Observe that for any given function $\mathscr{I}: [1,+\infty) \to [1,+\infty)$ satisfying $\mathscr{I}(x) = o(x^\beta)$ as $x \to +\infty$, one can always construct a sufficiently large truncation function $\mathscr{K}(x)$ such that $\mathscr{K}(T) \geqslant c_{29}^{-1} \mathscr{I}(T)$. Furthermore, once $\mathscr{K}(x)$ is fixed, we can always choose a sufficiently small function $\Gamma(x)$ satisfying
\[T^{\frac{1}{\beta}}\exp\big(-c_{25}\Gamma(\mathscr{K}(T))\big) \geqslant c_{24}^{-1} \mathscr{I}(T),\quad T\gg 1.\]
Consequently, combining the estimates from Lemmas \ref{lemma:9} and \ref{lemma:10} with our subsequent analysis, we deduce that
\[\nu_1^T - \nu_1 = \mathcal{O}\Big(\exp\big(-\mathscr{I}(T)\big)\Big),\quad T \to +\infty.\]
Since $\Gamma(x) = \Psi(x)\log \Lambda(x) = \Psi(x)\log x$, ensuring that $\Gamma(x)$ is appropriately small simply requires $\Psi(x)$ to be sufficiently small. By definition, this can be achieved by making the integral $\int_0^x \Phi(y)dy$ sufficiently large, which in turn guarantees the existence of such a function $\Phi(x)$. This completes the proof of Case \ref{item3}.

The proof of Theorem \ref{TH4} is now complete.

 \section*{Acknowledgements} 
 Z. Tong was supported by the China Postdoctoral Science Foundation (Grant No. 2025M783102). Y. Li was supported in part by the National Natural Science Foundation of China (Grant Nos. 12471183 and 12531009).

\end{document}